\providecommand{\U}[1]{\protect\rule{.1in}{.1in}}
\providecommand{\U}[1]{\protect \rule{.1in}{.1in}}
\newtheorem{theorem}{Theorem}[section]
\newtheorem{lemma}[theorem]{Lemma}
\newtheorem{proposition}[theorem]{Proposition}
\newenvironment{proof}[1][Proof]{\noindent \textbf{#1.} }{\  \rule{0.5em}{0.5em}}
\numberwithin{equation}{section}
\begin{document}

\title{Waring--Goldbach Problem: One Square, \\ Four Cubes and Higher Powers }

\author{Jinjiang Li\footnotemark[1]\,\,\,\, \, \& \,\,Min Zhang\footnotemark[2] \vspace*{-4mm} \\
$\textrm{\small Department of Mathematics, China University of Mining and Technology}^{*\,\dag}$
                    \vspace*{-4mm} \\
     \small  Beijing 100083, P. R. China  }

\footnotetext[2]{Corresponding author. \\
    \quad\,\, \textit{ E-mail addresses}:
     \href{mailto:jinjiang.li.math@gmail.com}{jinjiang.li.math@gmail.com} (J. Li),
      \href{mailto:min.zhang.math@gmail.com}{min.zhang.math@gmail.com} (M. Zhang).  }

\date{}
\maketitle

{\textbf{Abstract}}: Let $\mathcal{P}_r$ denote an almost--prime with at most $r$ prime factors, counted according to multiplicity. In this paper, it is proved that, for $12\leqslant b\leqslant 35$ and for every sufficiently large odd integer $N$, the equation
\begin{equation*}
   N=x^2+p_1^3+p_2^3+p_3^3+p_4^3+p_5^4+p_6^b
\end{equation*}
is solvable with $x$ being an almost--prime $\mathcal{P}_{r(b)}$ and the other variables primes, where $r(b)$ is defined in the Theorem. This result constitutes an improvement upon that of L\"{u} and Mu.

{\textbf{Keywords}}: Waring--Goldbach problem; Hardy--Littlewood Method; almost--prime; sieve method

{\textbf{MR(2010) Subject Classification}}: 11P05, 11P32, 11P55, 11N36.

\section{Introduction and main result}

Let $a,b$ and $N$ be positive integers and define $H_{a,b}(N)$ to be the number of solutions of the following Diophantine equation
\begin{equation*}
   N=x_1^2+x_2^3+x_3^3+x_4^3+x_5^3+x_6^a+x_7^b,
\end{equation*}
with all the variables $x_j$ being positive integers. In 1981, Hooley \cite{Hooley-book} obtained an asymptotic formula for $H_{3,5}(N)$. In 1991, from
Br\"{u}dern's work, Lu \cite{Lu-Ming-Gao-1991} get the asymptotic formula for $H_{3,b}(N)$. In addition, by using a sort of pruning technique, Lu \cite{Lu-Ming-Gao-1991} established the asymptotic formula for $H_{4,b}(N)\,(4\leqslant b\leqslant 6)$ and gave the lower bound estimates of the expected
order of magnitude for $H_{4,b}\,(7\leqslant b\leqslant17)$, $H_{5,b}(N)\,(5\leqslant b\leqslant9)$ and $H_{6,b}(N)\,(6\leqslant b\leqslant7)$. Motivated
by the work of Lu \cite{Lu-Ming-Gao-1991}, Dashkevich \cite{Dashkevich-1995} obtained the the asymptotic formula for $H_{6,8}(N)$.
In view of the results of Hooley, Lu and A. M. Dashkevich, it is reasonable to conjecture that, for every sufficiently large odd integer $N$ the
following equation
\begin{equation}\label{conjecture-Lv-Mu}
  N=p_1^2+p_2^3+p_3^3+p_4^3+p_5^3+p_6^a+p_7^b  \qquad (3\leqslant a\leqslant b)
\end{equation}
is solvable, where and below the letter $p$, with or without subscript, always denotes a prime number. But this conjecture is perhaps out of reach
at present. However, it is possible to replace a variable by an almost--prime. In 2016, L\"{u} and Mu \cite{Lu-Mu} employed the sieve theory and the Hardy--Littlewood method to obtain the following approximation to the conjecture (\ref{conjecture-Lv-Mu}).
\begin{theorem}[L\"{u} and Mu, 2016]\label{Lv-Mu-2016-theorem}
  Let $a$ and $b$ be positive integers such that
\begin{equation}\label{a-b-condition}
   \frac{5}{18}<\frac{1}{a}+\frac{1}{b}\leqslant\frac{1}{3}.
\end{equation}
For every sufficiently large odd integer $N$, let $\mathcal{R}_{a,b}(N)$ denote the number of solutions of the following equation
\begin{equation}
  N=x^2+p_2^3+p_3^3+p_4^3+p_5^3+p_6^a+p_7^b
\end{equation}
with $x$ being an almost--prime $\mathcal{P}_{r(a,b)}$ and the other variables primes, where $r(a,b)$ is equal to $[\frac{4}{3}(\frac{1}{a}+\frac{1}{b}-\frac{5}{18})^{-1}]$. Then we have
\begin{equation*}
  \mathcal{R}_{a,b}(N)\gg N^{\frac{1}{a}+\frac{1}{b}+\frac{13}{18}}\log^{-7}N.
\end{equation*}
\end{theorem}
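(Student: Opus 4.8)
The plan is to graft a weighted linear sieve onto the Hardy--Littlewood circle method, with the square variable $x$ playing the role of the sifting variable. For the prime variables I introduce the logarithmically weighted exponential sums $f_k(\alpha)=\sum_{p\le N^{1/k}}(\log p)e(\alpha p^k)$ with $k\in\{3,a,b\}$, and for the square the congruence-restricted sum $g_d(\alpha)=\sum_{d\mid x,\,x\le X}e(\alpha x^2)$, where the range $X$ is truncated as explained below. The sieve is run on the sequence $\mathcal A=(a_x)_{x\le X}$, where $a_x$ is the weighted number of representations of $N-x^2$ as $p_2^3+p_3^3+p_4^3+p_5^3+p_6^a+p_7^b$, so that the almost-prime count is governed by
\[
 S(d)=\sum_{d\mid x}a_x=\int_0^1 g_d(\alpha)\,f_3(\alpha)^4 f_a(\alpha)f_b(\alpha)\,e(-\alpha N)\,d\alpha .
\]
Everything reduces to a level-of-distribution estimate $\sum_{d\le D}\bigl|S(d)-\varrho(d)\mathfrak M\bigr|\ll\mathfrak M(\log N)^{-A}$ for $D$ as large as possible, with $\mathfrak M$ the expected main term and $\varrho$ a multiplicative density of linear-sieve type; the exponent $r(a,b)$ is then read off from $D$.

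First I would perform the major/minor arc dissection. On the major arcs, standard prime-power approximations replace each $f_k$ by a Gauss-sum times singular-integral factor and $g_d$ by its own local factor, yielding $S(d)=\varrho(d)\mathfrak M+(\text{error})$, where the density $\varrho(d)=\mathfrak S(N,d)/(d\,\mathfrak S(N,1))$ is multiplicative, bounded, and bounded below by the oddness of $N$ together with local solubility, so that $\mathfrak S(N,1)\gg1$. Here I would truncate the square variable at $X=N^{7/18}$ rather than the full $N^{1/2}$: the singular integral over this range has order $N^{7/18}\cdot N^{1/3+1/a+1/b}=N^{13/18+1/a+1/b}$, which pins down $\mathfrak M$ and explains the exponent $\tfrac{13}{18}=\tfrac{7}{18}+\tfrac13$ in place of the heuristic $\tfrac56$; the truncation is precisely what the minor arcs can afford.

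The heart of the argument, and the main obstacle, is the minor-arc bound made uniform in $d$ and summed over $d\le D$. I would bound $f_a(\alpha)$ and $f_b(\alpha)$ pointwise on the minor arcs by Weyl's inequality (Vinogradov's mean value for the larger exponents), pair the four cubes against the square by Cauchy--Schwarz using Hua's inequality $\int_0^1|f_3(\alpha)|^8\,d\alpha\ll N^{5/3+\varepsilon}$ and the mean square $\int_0^1|g_d(\alpha)|^2\,d\alpha\ll X/d$, and control the resulting divisor sum $\sum_{d\le D}|g_d(\alpha)|$ by a large-sieve/mean-value argument. The constraint $\tfrac1a+\tfrac1b>\tfrac5{18}$ is exactly the threshold at which the Weyl savings from $f_af_b$, together with the truncation at $X=N^{7/18}$, force the minor-arc contribution below $\mathfrak M$, while the upper bound $\tfrac1a+\tfrac1b\le\tfrac13$ keeps $a,b$ large enough that the square and four cubes remain the dominant backbone. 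The admissible level then satisfies $D=N^{\theta}$ with $\theta$ a positive multiple of $\tfrac1a+\tfrac1b-\tfrac5{18}$, and it is the summability of both the minor-arc error and the major-arc error (a Bombieri--Vinogradov-type obstacle) over $d\le D$ that determines how large $\theta$ can be.

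Finally, with this level of distribution in hand, I would apply Richert's weighted linear sieve (dimension one) to $\mathcal A$. Sifting $x\le X=N^{7/18}$ by primes up to $z=D^{1/u}$, every surviving $x$ has all prime factors exceeding $z$ and hence at most $(\log X)/(\log z)+O(1)$ of them; optimizing the weights converts the level $\theta\asymp\tfrac1a+\tfrac1b-\tfrac5{18}$ into the bound $r(a,b)=\bigl\lfloor\tfrac43(\tfrac1a+\tfrac1b-\tfrac5{18})^{-1}\bigr\rfloor$ on the number of prime factors. The sieve's positive lower bound for the weighted count, multiplied by $\mathfrak M\asymp N^{13/18+1/a+1/b}$ and corrected for the six $\log p$ weights (a factor $(\log N)^{-6}$) and the sieve's own $(\log N)^{-1}$, yields $\mathcal R_{a,b}(N)\gg N^{1/a+1/b+13/18}(\log N)^{-7}$. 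The most delicate bookkeeping is keeping the sieve's error terms below the main term uniformly, which again rests entirely on the quality of the minor-arc/level-of-distribution bound of the previous step.
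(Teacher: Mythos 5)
A preliminary remark: the statement you were asked to prove is L\"u and Mu's theorem, which this paper only \emph{cites}; the fair comparison is with the identical machinery the paper runs for its own Theorem~\ref{Theorem} (Proposition~\ref{mean-value-theorem-1}, Proposition~\ref{mean-value-theorem-2} and Section~6), which is also how L\"u and Mu argue. Your overall architecture --- Rosser--Iwaniec linear sieve on the square variable grafted onto the circle method, reduced to a level-of-distribution estimate for $S(d)$ --- is the right one, and your sieve bookkeeping at the end is plausible. The fatal problem is the minor-arc step, which you rightly call the heart of the argument. With your truncation $X=N^{7/18}$ and four full-length cubes, Cauchy--Schwarz with $\int_0^1|g_d|^2\,\mathrm{d}\alpha\ll X/d$ and Hua's inequality $\int_0^1|f_3|^8\,\mathrm{d}\alpha\ll N^{5/3+\varepsilon}$ give
\begin{equation*}
\int_{\mathfrak{m}}\big|g_d f_3^4 f_a f_b\big|\,\mathrm{d}\alpha\ \ll\ \sup_{\alpha\in\mathfrak{m}}|f_a(\alpha)f_b(\alpha)|\cdot\Big(\frac{X}{d}\Big)^{1/2}N^{\frac{5}{6}+\varepsilon},
\end{equation*}
whereas the expected main term is $\asymp XN^{\frac13+\frac1a+\frac1b}/d$ up to logarithms. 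You therefore need
\begin{equation*}
\sup_{\alpha\in\mathfrak{m}}|f_a(\alpha)f_b(\alpha)|\ \ll\ \Big(\frac{X}{d}\Big)^{1/2}N^{\frac{1}{a}+\frac{1}{b}-\frac{1}{2}}
\ =\ N^{\frac{1}{a}+\frac{1}{b}-\frac{11}{36}}d^{-1/2}\ \leqslant\ N^{\frac{1}{36}},
\end{equation*}
i.e.\ a saving of at least $N^{11/36}$ from the two high-degree prime sums, and more after summing over $d\leqslant D$. This is unattainable: even square-root cancellation in both sums (GRH-strength, far beyond any known minor-arc technology; actual Weyl--Vinogradov--Kumchev savings for prime sums of degree $\geqslant4$ are minuscule powers) would save only $N^{\frac12(\frac1a+\frac1b)}\leqslant N^{1/6}<N^{11/36}$. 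Worse, the required bound is simply false for any workable dissection: at $\alpha$ near a rational $a'/q$ with $q$ prime, $q\equiv1\pmod{ab}$, of size a fixed power of $\log N$ just outside your major arcs, the standard major-arc evaluation gives $|f_af_b|\gg N^{\frac1a+\frac1b}q^{-1-\varepsilon}\gg N^{\frac{5}{18}-\varepsilon}\gg N^{1/36}$; and enlarging the major arcs to $q\asymp N^{11/36}$ destroys the major-arc asymptotics, since uniformity in $d\leqslant D$ there rests on Siegel--Walfisz, which reaches only $q\ll\log^{C}N$.

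The idea you are missing --- the actual content of L\"u--Mu's proof, reproduced in this paper's Proposition~\ref{mean-value-theorem-1} --- is to distribute the roles the opposite way: take the \emph{pointwise} minor-arc saving from the \emph{square} and the \emph{mean-value} saving from the \emph{primes}, and shorten two cubes rather than the square. One keeps the square at full length $U_2\asymp N^{1/2}$ but sieved, i.e.\ works with the bilinear sum $h(\alpha)$; being a bilinear form in $e(\alpha(mn\ell)^2)$, it admits genuine power saving on the minor arcs (Br\"udern--Kawada; see Lemma~\ref{h(a)-upper-m2}, giving $h(\alpha)\ll N^{\frac{17}{72}+\frac{1}{2b}-24\varepsilon}$ on $\mathfrak{m}_2$) --- exactly what is unavailable for $f_a,f_b$. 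Two of the four cubes are restricted to length $U_3^*\asymp N^{5/18}$ so that Vaughan's theorem (Lemma~\ref{f_3-mean-zh}) $\int_0^1|f_3f_3^{*2}|^2\,\mathrm{d}\alpha\ll N^{\frac89+\varepsilon}$ applies, and the remaining three sums are handled by Br\"udern's lemma (Lemma~\ref{Brudern-lemma-1987}) $\int_0^1|f_3f_af_b|^2\,\mathrm{d}\alpha\ll N^{\frac13+\frac1a+\frac1b+\varepsilon}$, whose nesting hypothesis $1/b\leqslant1/a$, $1/a+1/b\leqslant1/3$ is precisely the source of the upper bound in condition~(\ref{a-b-condition}). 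Cauchy--Schwarz then yields the diagonal-dominated bound $\int_0^1|f_3^2f_3^{*2}f_af_b|\,\mathrm{d}\alpha\ll N^{\frac{11}{18}+\frac12(\frac1a+\frac1b)+\varepsilon}$, so one only needs $\sup_{\mathfrak{m}}|h|\ll N^{\frac19+\frac12(\frac1a+\frac1b)-\delta}$; since the bilinear bound carries the floor $N^{\frac14+\varepsilon}D^{2/3}$, this holds with a positive level $D=N^{\frac34(\frac1a+\frac1b-\frac{5}{18})-\varepsilon}$ exactly when $1/a+1/b>5/18$. This is where the lower bound of~(\ref{a-b-condition}), the exponent $\frac{13}{18}=\frac12+\frac13+\frac13+\frac{5}{18}+\frac{5}{18}-1$ (not your $\frac{7}{18}+\frac13$), and, after sifting the full-length square $x\leqslant N^{1/2}$ at this level, the value $r(a,b)=[\frac43(\frac1a+\frac1b-\frac{5}{18})^{-1}]$ all come from.
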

Especially, if $a=4$, then there holds $12\leqslant b\leqslant35$ from the condition (\ref{a-b-condition}), and the values of $r(4,b)$ are as follows:
\begin{equation*}
  r(4,12)=24,\,\, r(4,13)=27,\,\, r(4,14)=30,\,\, r(4,15)=34,\,\, r(4,16)=38,
\end{equation*}
\begin{equation*}
  r(4,17)=42,\,\, r(4,18)=48,\,\, r(4,19)=53,\,\, r(4,20)=60,\,\, r(4,21)=67,
\end{equation*}
\begin{equation*}
  r(4,22)=75,\,\, r(4,23)=84,\,\, r(4,24)=96,\,\, r(4,25)=109,\,\, r(4,26)=124,
\end{equation*}
\begin{equation*}
  r(4,27)=144,\,\, r(4,28)=168,\,\, r(4,29)=198,\,\, r(4,30)=240,\,\, r(4,31)=297,
\end{equation*}
\begin{equation*}
  r(4,32)=384,\,\, r(4,33)=528,\,\, r(4,34)=816,\,\, r(4,35)=1680.
\end{equation*}

In this paper, we shall improve the result of L\"{u} and Mu \cite{Lu-Mu} in the cases $a=4,\,12\leqslant b\leqslant35$ and establish the following theorem.
\begin{theorem}\label{Theorem}
  For $12\leqslant b\leqslant35$, let $\mathcal{R}_b(N)$ denote the number of
solutions of the following equation
\begin{equation}
    N=x^2+p_1^3+p_2^3+p_3^3+p_4^3+p_5^4+p_6^b
\end{equation}
with $x$ being an almost--prime $\mathcal{P}_{r(b)}$ and the other variables primes. Then, for sufficiently large odd integer $N$, we have
\begin{equation*}
   \mathcal{R}_b(N)\gg N^{\frac{35}{36}+\frac{1}{b}}\log^{-7}N,
\end{equation*}
where
\begin{equation*}
  r(12)=6,\,\, r(13)=7,\,\,r(14)=7,\,\,r(15)=7,\,\,r(16)=8,\,\,r(17)=8,
\end{equation*}
\begin{equation*}
  r(18)=8,\,\,r(19)=8,\,\,r(20)=9,\,\,r(21)=9,\,\,r(22)=9,\,\,r(23)=10,
\end{equation*}
\begin{equation*}
  r(24)=10,\,\,r(25)=10,\,\, r(26)=11,\,\,r(27)=11,\,\,r(28)=11,\,\,r(29)=12,
\end{equation*}
\begin{equation*}
  r(30)=12,\,\,r(31)=13,\,\, r(32)=13,\,\,r(33)=14,\,\,r(34)=15,\,\,r(35)=17.
\end{equation*}

\end{theorem}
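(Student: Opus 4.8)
The plan is to recast the problem as a weighted linear sieve applied to the variable $x$, and to supply the required arithmetic information through the Hardy--Littlewood method. Concretely, I would restrict $x$ to the range $x\asymp X$ with $X=N^{7/18}$ and attach to each such $x$ the weight
$$
a_x=\sum_{p_1^3+p_2^3+p_3^3+p_4^3+p_5^4+p_6^b=N-x^2}(\log p_1)\cdots(\log p_6),
$$
so that $\sum_x a_x$ counts, up to the harmless logarithmic weights and a final partial summation, the representations of $N$ with $x$ unrestricted in its factorization. The choice $X=N^{7/18}$ is exactly what produces the announced main term of order $N^{35/36+1/b}$: the inner representation function has expected size $N^{7/12+1/b}/\log^6N$, and $N^{7/18}\cdot N^{7/12+1/b}=N^{35/36+1/b}$. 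The goal is then to show that a positive proportion, in the weighted sense, of these $x$ are $\mathcal{P}_{r(b)}$, which a weighted sieve delivers once one has a sufficiently strong level of distribution for the sequence $\mathcal{A}=(a_x)$.

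The arithmetic input the sieve needs is an asymptotic evaluation of
$$
S(d)=\sum_{\substack{x\asymp X\\ d\mid x}}a_x=\int_0^1 f_3(\alpha)^4 f_4(\alpha) f_b(\alpha)\,g_d(\alpha)\,e(-\alpha N)\,d\alpha,
$$
where $f_k(\alpha)=\sum_{p\le N^{1/k}}(\log p)\,e(\alpha p^k)$ and $g_d(\alpha)=\sum_{x\asymp X,\,d\mid x}e(-\alpha x^2)$, valid uniformly enough that the error terms are summable over $d\le D$ with $D=N^{\lambda}$ as large as possible. I would dissect $[0,1]$ into major and minor arcs in the usual way. On the major arcs each $f_k$ is replaced by its main-term approximation and $g_d$ by a Gauss-sum times singular-integral expression; assembling these yields $S(d)=g(d)\,\mathcal{X}+\text{error}$ with $\mathcal{X}\asymp X\,\mathfrak{S}(N)\,N^{7/12+1/b}/\log^6N$, a multiplicative density $g(d)\approx 1/d$, and a singular series $\mathfrak{S}(N)\gg 1$ for odd $N$ once the local solubility conditions are checked. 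The uniformity in $d$ enters only through the dependence of the Gauss sum and singular series on $d$, which must be tracked but is routine.

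The decisive step---and the main obstacle---is the minor-arc estimate
$$
\sum_{d\le D}\ \int_{\mathfrak{m}}\bigl|f_3(\alpha)^4 f_4(\alpha) f_b(\alpha)\,g_d(\alpha)\bigr|\,d\alpha=o\!\left(\mathcal{X}/\log N\right),
$$
since it is precisely the admissible size of $D$ that determines $r(b)$, and hence where any improvement over L\"{u} and Mu must come from. The four prime cubes are the workhorse: Hua's inequality and its refinements for cubes supply the bulk of the mean-value savings through $\int_0^1|f_3|^{8}$, while $f_4$, the short sum $f_b$, and the divisor-restricted square sum $g_d$ must be estimated and distributed among the factors so as to maximise $\lambda$. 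I would bound $g_d$ by a Weyl/van der Corput argument gaining uniformly in $d$, reserve one or two copies of $f_3$ together with $f_4$ or $f_b$ for pointwise Weyl estimates on $\mathfrak{m}$, apply Cauchy--Schwarz to pair the remaining factors against a fourth-moment bound, and finally carry out the sum over $d$ by opening $g_d$ and exploiting the extra average. Optimising this allocation is what yields a larger level of distribution and therefore the smaller exponents $r(b)$.

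With the level of distribution $D=N^{\lambda}$ in hand, the final step is to run a weighted linear sieve of Richert/Greaves type (dimension $\kappa=1$) on $\mathcal{A}$: the well-distributed main term together with the standard lower-bound sieve weights produces $\gg\mathcal{X}/\log N=N^{35/36+1/b}\log^{-7}N$ integers $x\asymp X$, counted with weight $a_x$, having at most $r$ prime factors, where $r$ is the least integer compatible with the ratio $\log D/\log X$ through the sieve's weight inequality. Evaluating this threshold for each $b\in\{12,\dots,35\}$ gives the tabulated values $r(b)$, and the positivity of $\mathfrak{S}(N)$ for odd $N$ guarantees that the representation genuinely exists; the seventh logarithm beyond the six coming from the prime variables is the usual cost of the sieve. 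I expect the major-arc and sieve steps to be essentially standard, with all the real work---and the gain over the earlier result---concentrated in the averaged minor-arc bound.
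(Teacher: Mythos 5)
Your proposal diverges from the paper at the very first structural choice, and that choice is fatal to the minor--arc estimate you yourself identify as the crux. You shorten the sieved variable to $x\asymp X=N^{7/18}$ and keep \emph{four full-length} prime cubes; the paper does the opposite: it keeps $x$ at full length $\asymp N^{1/2}$ and shortens \emph{two of the cubes} to $N^{5/18}$ (the sums $f_3^*$ over $(U_3^*,2U_3^*]$). Both choices give the same main-term exponent, since $\tfrac{7}{18}+\tfrac{7}{12}+\tfrac1b=\tfrac12+\tfrac23+\tfrac59+\tfrac14+\tfrac1b-1=\tfrac{35}{36}+\tfrac1b$, but they are not interchangeable. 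In your arrangement the unsieved factors must be controlled by a mean value over $[0,1]$, and the best available bound is of the shape $\int_0^1|f_3^4f_4f_b|\,\mathrm{d}\alpha\leqslant\bigl(\int_0^1|f_3f_4f_b|^2\,\mathrm{d}\alpha\bigr)^{1/2}\bigl(\int_0^1|f_3|^6\,\mathrm{d}\alpha\bigr)^{1/2}\ll N^{7/24+1/(2b)+\varepsilon}\cdot N^{7/12}=N^{7/8+1/(2b)+\varepsilon}$, using Br\"udern's lemma and the sixth-moment bound $\int_0^1|f_3|^6\ll N^{7/6+\varepsilon}$ (Cauchy between Hua's fourth and eighth moments); nothing substantially below $N^{7/8}$ is known, as that would require major progress on the sixth moment of cubic Weyl sums. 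Meanwhile your $g_d$ has length $X/d=N^{7/18}/d$, so even granting full square-root cancellation on \emph{every} minor arc --- more than Weyl gives, and certainly false on arcs with $q$ of size $\log^{O(1)}N$, where a single quadratic sum saves only $\sqrt q$ --- you get $\sup|g_d|\ll N^{7/36}$. The product is $N^{7/36+7/8+1/(2b)}=N^{77/72+1/(2b)}$, whereas the target is $N^{70/72+1/b}$: the estimate fails by a factor of at least $N^{7/72-1/(2b)}\geqslant N^{1/18}$ for all $b\geqslant12$ (it could only succeed for $b<36/7$). The same arithmetic shows that \emph{no} positive level of distribution $D$ is attainable in your setup, and Cauchy--Schwarz rearrangements (e.g.\ absorbing $g_d$ into an $L^2$ mean) hit the same wall, because the diagonal of $\int_0^1|g_df_3f_4f_b|^2\,\mathrm{d}\alpha$ is already of main-term size. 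This is a power-of-$N$ deficit, not a matter of optimizing the allocation of factors.

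The paper's arrangement is precisely what evades this obstruction, and it is where the improvement over L\"u--Mu lives. Keeping the sieve variable long makes the bilinear form $h(\alpha)$ (of length $N^{1/2}$, with coefficients $a(m)b(n)$, $mn\leqslant D$) amenable to the Br\"udern--Kawada estimate $h(\alpha)\ll N^{1/2+\varepsilon}Q_1^{-1/2}+N^{1/4+\varepsilon}D^{2/3}\ll N^{17/72+1/(2b)-24\varepsilon}$ on $\mathfrak{m}_2$, while shortening two cubes lets Vaughan's theorem $\int_0^1|f_3f_3^{*2}|^2\ll N^{8/9+\varepsilon}$ and Cai's $\int_0^1|f_3f_3^*|^4\ll N^{13/9}\log^8N$ pull the companion mean value down to $N^{53/72+1/(2b)+\varepsilon}$; the product then lands under $N^{35/36+1/b}$ exactly when $D\ll N^{3/(4b)-1/48}$, which is the paper's level of distribution. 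Two further steps you wave off as routine are not: the arcs with small $q$ but $\alpha$ not extremely close to $a/q$ (the paper's $\mathfrak{m}_0,\mathfrak{m}_1$) admit no useful pointwise bound for the sieve sum and are pruned via the decomposition $f_4=V_4+\Delta_4+O(1)$ together with the mean-square estimates for $W\Delta_4$, $V_4$ and $W$ over the arcs $\mathfrak{N}(q,a)$; and the table of $r(b)$ does not come from a weighted-sieve threshold in $\log D/\log X$. Instead, the paper runs the plain Iwaniec linear sieve (bounds $f(3)$, $F(3)$, since $z=D^{1/3}$) and subtracts, for each $r>r(b)$, the count of solutions in which $x$ has exactly $r$ prime factors, each bounded by an upper-bound sieve through a \emph{second} mean value theorem for $J_r(N,d)$; the admissible $r(b)$ is then the least $r$ with $C(b)=\sum_{r'>r}c_{r'}(b)<\log 2$, verified numerically. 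Without both the correct range allocation and this two-proposition sieve structure, neither the stated level of distribution nor the stated values of $r(b)$ can be recovered.
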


The proof of our result employs the Hardy--Littlewood circle method and Iwaniec's linear sieve method, from which we can give a lower bound estimate of $\mathcal{R}_b(N)$, which is stronger than that of the result of L\"{u} and Mu \cite{Lu-Mu} and leads to the refinement.

\section{Notation}
Throughout this paper, $N$ always denotes a sufficiently large odd integer; $\mathcal{P}_r$ denote an almost--prime with at most $r$ prime factors, counted according to multiplicity; $\varepsilon$ always denotes an arbitrary small positive constant, which may not be the same at different occurrences; $\gamma$ denotes Euler's constant; $f(x)\ll g(x)$ means that $f(x)=O(g(x))$; $f(x)\asymp g(x)$ means that $f(x)\ll g(x)\ll f(x)$; the letter $p$, with or without subscript, always stands for a prime number; the constants in the $O$--term and $\ll$--symbol depend at most on $\varepsilon$. As usual, $\varphi(n),\,\mu(n)$ and $\tau_k(n)$  denote Euler's function, M\"{o}bius' function and the $k$--dimensional divisor function, respectively. Especially, we write $\tau(n)=\tau_2(n)$. $p^\ell\|m$ means that $p^\ell|m$ but $p^{\ell+1}\nmid m$. We denote by $a(m)$ and $b(n)$ arithmetical functions satisfying $|a(m)|\ll1$ and $|b(n)|\ll1$; $(m,n)$ denotes the
greatest common divisor of $m$ and $n$; $e(\alpha)=e^{2\pi i\alpha}$. We always denote by $\chi$ a Dirichlet character $(\bmod q)$, and by $\chi^0$ the principal Dirichlet character $(\bmod q)$. Let
\begin{equation*}
  A=10^{100},\,\, Q_0=\log^{20A}N,\,\, Q_1=N^{\frac{19}{36}-\frac{1}{b}+50\varepsilon},\,\, Q_2=N^{\frac{17}{36}+\frac{1}{b}-50\varepsilon},\,\,
  D=N^{\frac{3}{4b}-\frac{1}{48}-51\varepsilon},
\end{equation*}
\begin{equation*}
  z=D^{\frac{1}{3}},\qquad U_k=\frac{1}{k}N^{\frac{1}{k}},\qquad U_3^*=\frac{1}{3}N^{\frac{5}{18}}, \qquad F_3(\alpha)=\sum_{U_3<n\leqslant2U_3}e(n^3\alpha),
\end{equation*}
\begin{equation*}
 F_3^*(\alpha)=\sum_{U_3^*<n\leqslant2U_3^*}e(n^3\alpha),\quad  v_k(\beta)=\int_{U_k}^{2U_k}e(\beta u^k)\mathrm{d}u,
 \quad v_3^*(\beta)=\int_{U_3^*}^{2U_3^*}e(\beta u^3)\mathrm{d}u,
\end{equation*}
\begin{equation*}
f_k(\alpha)=\sum_{U_k<p\leqslant2U_k}(\log p)e(p^k\alpha),\qquad
f_3^*(\alpha)=\sum_{U_3^*<p\leqslant2U_3^*}(\log p)e(p^3\alpha),
\end{equation*}
\begin{equation*}
  G_k(\chi,a)=\sum_{n=1}^q\chi(n)e\bigg(\frac{an^k}{q}\bigg),\quad S_k^*(q,a)=G_k(\chi^0,a),\quad S_k(q,a)=\sum_{n=1}^qe\bigg(\frac{an^k}{q}\bigg),
\end{equation*}
\begin{equation*}
  \mathcal{J}(N)=\int_{-\infty}^{+\infty}v_2(\beta)v_3^2(\beta)v_3^{*2}(\beta)v_4(\beta)v_b(\beta)e(-\beta N)\mathrm{d}\beta,
   \quad \mathcal{L}=\big\{n: U_2<n\leqslant2U_2\big\},
\end{equation*}
\begin{equation*}
 f_2(\alpha,d)=\sum_{U_2<d\ell\leqslant2U_2}e\big(\alpha(d\ell)^2\big),\qquad
 h(\alpha)=\sum_{m\leqslant D^{2/3}}a(m)\sum_{n\leqslant D^{1/3}}b(n)f_2(\alpha,mn),
\end{equation*}
\begin{equation*}
 B_d(q,N)=\sum_{\substack{a=1\\(a,q)=1}}^qS_2(q,ad^2)S_3^{*4}(q,a)S_4^{*}(q,a)S_b^{*}(q,a)e\bigg(-\frac{aN}{q}\bigg),\quad B(q,N)=B_1(q,N),
\end{equation*}
\begin{equation*}
  A_d(q,N)=\frac{B_d(q,N)}{q\varphi^6(q)}, \qquad A(q,N)=A_1(q,N),\qquad \mathfrak{S}_d(N)=\sum_{q=1}^\infty A_d(q,N),
\end{equation*}
\begin{equation*}
\mathfrak{S}(N)=\mathfrak{S}_1(N),\quad \mathfrak{P}=\prod_{2<p<z}p,\quad  \log\mathbf{U}=(\log2U_3)^2(\log2U_3^*)^2(\log2U_4)(\log2U_b),
\end{equation*}
\begin{equation*}
  \log\mathbf{W}=(\log U_3)(\log U_3^*)^2(\log U_4)(\log U_b),\quad
  g_r(\alpha)=\sum_{\substack{\ell\in\mathscr{N}_r\\ \ell p\in\mathcal{L}}}\frac{\log p}{\log\frac{U_2}{\ell}}e\big(\alpha(\ell p)^2\big),
\end{equation*}
\begin{equation*}
  \quad \mathscr{M}_r=\big\{m:U_2<m\leqslant 2U_2,m=p_1p_2\cdots p_r,\, z\leqslant p_1\leqslant p_2\leqslant\cdots\leqslant p_r\big\},
\end{equation*}
\begin{equation*}
  \quad \mathscr{N}_r=\big\{m: m=p_1p_2\cdots p_{r-1},z\leqslant p_1\leqslant p_2\leqslant\cdots\leqslant p_{r-1},\,
  p_1p_2\cdots p_{r-2}p_{r-1}^2\leqslant2U_2\big\}.
\end{equation*}

\section{Preliminary Lemmas}

In order to prove Theorem we need the following lemmas.

\begin{lemma}\label{Titchmarsh-lemma-42}
Let $F(x)$ be a real differentiable function such that $F'(x)$ is monotonic, and $F'(x)\geqslant m>0$, or $F'(x)\leqslant-m<0$, throughout
 the interval $[a,b]$. Then we have
 \begin{equation*}
     \bigg|\int_a^b e^{iF(x)}\mathrm{d}x\bigg|\leqslant\frac{4}{m}.
 \end{equation*}
\end{lemma}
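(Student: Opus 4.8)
The plan is to treat this as the classical first-derivative (van der Corput type) estimate for an oscillatory integral and to prove it by trading the oscillation of $e^{iF}$ for a factor of size $1/m$ through an integration-by-parts manoeuvre. The structural fact driving everything is that, since $F'$ is monotonic and of constant sign with $|F'(x)|\geqslant m$ on $[a,b]$, the reciprocal $g(x)=1/F'(x)$ is well defined, is itself monotonic, is therefore of bounded variation, and satisfies $|g(x)|\leqslant 1/m$ throughout. I would deliberately phrase the argument in terms of this monotonicity alone, so as not to smuggle in an unstated hypothesis that $F''$ exists.

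First I would write, legitimately since $F'$ never vanishes,
\begin{equation*}
  e^{iF(x)}=\frac{1}{F'(x)}\,\frac{\mathrm{d}}{\mathrm{d}x}\Bigl(\frac{e^{iF(x)}}{i}\Bigr),
\end{equation*}
and set $\psi(x)=e^{iF(x)}/i$, so that $|\psi(x)|=1$ and $\int_a^b e^{iF(x)}\mathrm{d}x=\int_a^b g(x)\,\mathrm{d}\psi(x)$. Because $\psi$ is continuous (as $F$ is differentiable) and $g$ is of bounded variation, the Riemann--Stieltjes integration-by-parts formula applies and yields
\begin{equation*}
  \int_a^b g(x)\,\mathrm{d}\psi(x)=\frac{\psi(b)}{F'(b)}-\frac{\psi(a)}{F'(a)}-\int_a^b\psi(x)\,\mathrm{d}\Bigl(\frac{1}{F'(x)}\Bigr).
\end{equation*}

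Then I would bound the three resulting pieces using $|\psi|=1$. The two boundary terms contribute at most $1/|F'(b)|+1/|F'(a)|\leqslant 2/m$, while the remaining Stieltjes integral is dominated by the total variation of $g$, namely $\int_a^b|\mathrm{d}(1/F')|=|1/F'(b)-1/F'(a)|\leqslant 1/m$, the evaluation of the variation being exact precisely because $1/F'$ is monotonic. Collecting these estimates gives $\bigl|\int_a^b e^{iF(x)}\mathrm{d}x\bigr|\leqslant 3/m\leqslant 4/m$, which is the assertion. An equivalent classical route, should one prefer to avoid the Stieltjes formalism, is to split into $\int_a^b\cos F\,\mathrm{d}x$ and $\int_a^b\sin F\,\mathrm{d}x$, write each as $\int_a^b (1/F')\,\mathrm{d}(\sin F)$ and $-\int_a^b (1/F')\,\mathrm{d}(\cos F)$, and apply the second mean value theorem for integrals with the monotone factor $1/F'$; a careful grouping of the resulting endpoint values bounds each of the two real integrals by $2/m$, and the triangle inequality then delivers exactly $4/m$.

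I do not anticipate a genuine obstacle, as this is a standard lemma; the only point that repays attention is precisely the avoidance of any tacit smoothness beyond what is stated. Keeping the computation at the level of bounded variation and the second mean value theorem (equivalently, Riemann--Stieltjes integration by parts) accommodates the bare hypothesis that $F'$ be monotonic, and it is exactly this monotonicity, inherited without loss by $1/F'$, that collapses the total-variation term to the telescoping quantity $|1/F'(b)-1/F'(a)|$ rather than to something requiring a cruder bound.
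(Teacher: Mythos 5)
Your proof is correct; note, though, that the paper contains no argument for this lemma at all --- its ``proof'' is a one-line citation to Lemma 4.2 of Titchmarsh's book, so any complete argument is necessarily ``different'' from what the paper does. Your alternative route (splitting into $\int_a^b\cos F\,\mathrm{d}x$ and $\int_a^b\sin F\,\mathrm{d}x$, writing each as a Stieltjes integral against $\mathrm{d}(\sin F)$ or $\mathrm{d}(\cos F)$, and applying the second mean value theorem with the monotone factor $1/F'$ to get $2/m$ per piece) is precisely the argument of the cited source, so in that sense you have reconstructed the proof the paper points to. Your primary route is a clean variant that stays honestly at the level of bounded variation: the Riemann--Stieltjes integration by parts is legitimate because $\psi=e^{iF}/i$ is continuous and $g=1/F'$ is monotonic, hence of bounded variation; and the identification $\int_a^b e^{iF(x)}\,\mathrm{d}x=\int_a^b g\,\mathrm{d}\psi$ is justified because $F'$, being monotonic, is Riemann integrable, so $\psi$ has Riemann-integrable derivative $F'e^{iF}$ and the standard reduction theorem for Stieltjes integrals applies. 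Two minor remarks. First, your constant improves for free: since $g(a)$ and $g(b)$ have the same sign, $|g(b)|+|g(a)|+|g(b)-g(a)|=2\max\bigl(|g(a)|,|g(b)|\bigr)\leqslant 2/m$, so your argument actually yields $2/m$, sharper than both your claimed $3/m$ and the stated $4/m$. Second, the scruple about not assuming $F''$ exists is well taken, but can be discharged even more cheaply: $F'$ is a derivative, hence has the Darboux intermediate-value property, and a monotonic function with that property is automatically continuous; this makes all the integrals in question integrals of continuous functions, though your bounded-variation formulation handles the existence issues in any case.
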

\begin{proof}
   See Lemma 4.2 of Titchmarsh \cite{Titchmarsh}.
\end{proof}

\begin{lemma}\label{Titchmarsh-lemma-48}
 Let $f(x)$ be a real differentiable function in the interval $[a,b]$. If $f'(x)$ is monotonic and satisfies $|f'(x)|\leqslant\theta<1$. Then we have
\begin{equation*}
   \sum_{a<n\leqslant b}e^{2\pi if(n)}=\int_a^be^{2\pi if(x)}\mathrm{d}x+O(1).
\end{equation*}
\end{lemma}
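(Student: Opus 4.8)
The plan is to read the statement as a first--derivative (van der Corput type) comparison between an exponential sum and the corresponding exponential integral. The hypothesis $|f'(x)|\leqslant\theta<1$ is precisely the condition guaranteeing that, among all integer frequencies, only $\nu=0$ is stationary: the frequency $\nu=0$ produces the main term $\int_a^be^{2\pi if(x)}\,\mathrm{d}x$, while every frequency $\nu\neq0$ is genuinely oscillatory and will be estimated through the first--derivative bound of Lemma \ref{Titchmarsh-lemma-42}.

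First I would apply elementary (Euler--Maclaurin) summation to $\phi(x)=e^{2\pi if(x)}$. Writing $\psi(x)=x-[x]-\tfrac12$ for the sawtooth function and using $\mathrm{d}[x]=\mathrm{d}x-\mathrm{d}\psi(x)$ followed by one integration by parts, one obtains
\[
  \sum_{a<n\leqslant b}e^{2\pi if(n)}=\int_a^be^{2\pi if(x)}\,\mathrm{d}x+2\pi i\int_a^b\psi(x)f'(x)e^{2\pi if(x)}\,\mathrm{d}x+O(1),
\]
where the $O(1)$ collects the boundary contributions $\phi(a)\psi(a)$ and $\phi(b)\psi(b)$, harmless because $|\phi|=1$ and $|\psi|\leqslant\tfrac12$. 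The first term is exactly the desired integral, so the whole matter reduces to proving that $R:=\int_a^b\psi(x)f'(x)e^{2\pi if(x)}\,\mathrm{d}x=O(1)$. To this end I would insert the Fourier expansion $\psi(x)=-\sum_{\nu\neq0}(2\pi i\nu)^{-1}e^{2\pi i\nu x}$; since the symmetric partial sums of this series are uniformly bounded and converge to $\psi$ off the integers, dominated convergence legitimises the interchange of sum and integral and gives
\[
  R=-\frac{1}{2\pi i}\sum_{\nu\neq0}\frac1\nu\,J_\nu,\qquad J_\nu:=\int_a^bf'(x)e^{2\pi i(f(x)+\nu x)}\,\mathrm{d}x.
\]

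The crux is to estimate the weighted frequency integrals $J_\nu$ so well that, after the harmonic weight $\nu^{-1}$, the series converges absolutely. The main obstacle here is that a naive first--derivative bound for the full integrand is not strong enough, and a careless use of $f'=(f'+\nu)-\nu$ merely reproduces the original sum. I would instead keep $f'$ as an amplitude: because $f'$ is real and monotonic, the second mean value theorem for integrals strips it off at the cost of a factor $|f'(a)|+|f'(b)|\leqslant2\theta$, leaving amplitude--free integrals over subintervals of $[a,b]$. On each such subinterval the phase $2\pi(f(x)+\nu x)$ has a monotonic derivative $2\pi(f'(x)+\nu)$ which, by $|f'|\leqslant\theta<1$, satisfies $|f'(x)+\nu|\geqslant|\nu|-\theta\geqslant1-\theta>0$; this is the only place where $\theta<1$ enters, and it lets Lemma \ref{Titchmarsh-lemma-42} apply with $m=2\pi(|\nu|-\theta)$, bounding each such integral by $2/\bigl(\pi(|\nu|-\theta)\bigr)$. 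Combining, $|J_\nu|\ll\theta/(|\nu|-\theta)$, whence
\[
  |R|\ll\sum_{\nu\neq0}\frac{\theta}{|\nu|\,(|\nu|-\theta)}=O(1),
\]
the series now converging absolutely. Substituting back yields $\sum_{a<n\leqslant b}e^{2\pi if(n)}=\int_a^be^{2\pi if(x)}\,\mathrm{d}x+O(1)$. The delicate points to verify carefully are the uniform boundedness of the sawtooth partial sums (justifying the term--by--term passage) and the complex form of the second mean value theorem, which one handles by splitting $J_\nu$ into its real and imaginary parts.
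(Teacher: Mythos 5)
Your proposal is correct, and it is essentially the argument behind the paper's own treatment: the paper gives no proof at all, simply citing Lemma 4.8 of Titchmarsh \cite{Titchmarsh}, and Titchmarsh's proof of that lemma is precisely your route --- Euler--Maclaurin summation with the sawtooth $\psi(x)$, its Fourier expansion with bounded partial sums, the second mean value theorem (applied to real and imaginary parts) to strip off the monotone amplitude $f'$, and the first-derivative test (the paper's Lemma \ref{Titchmarsh-lemma-42}) applied to the phases $f(x)+\nu x$, whose derivatives satisfy $|f'(x)+\nu|\geqslant|\nu|-\theta>0$. Nothing needs to be added; the only point worth noting is that the resulting $O(1)$ constant depends on $\theta$ (through the $\nu=\pm1$ terms $\theta/(1-\theta)$), exactly as in the cited source.
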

\begin{proof}
   See Lemma 4.8 of Titchmarsh \cite{Titchmarsh}.
\end{proof}

\begin{lemma}\label{Brudern-lemma-1987}
Let $2\leqslant k_1\leqslant k_2\leqslant\cdots\leqslant k_s$ be natural numbers such that
\begin{equation*}
   \sum_{i=j+1}^s\frac{1}{k_i}\leqslant \frac{1}{k_j},\quad 1\leqslant j\leqslant s-1.
\end{equation*}
Then we have
\begin{equation*}
   \int_0^1\bigg|\prod_{i=1}^s f_{k_i}(\alpha)\bigg|^2\mathrm{d}\alpha\ll N^{\frac{1}{k_1}+\cdots+\frac{1}{k_s}+\varepsilon}.
\end{equation*}
\end{lemma}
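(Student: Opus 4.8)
The plan is to read the integral combinatorially and then induct on $s$, letting the hypothesis $\sum_{i=j+1}^s k_i^{-1}\leqslant k_j^{-1}$ do all the work. By orthogonality, $\int_0^1\big|\prod_{i=1}^s f_{k_i}(\alpha)\big|^2\mathrm{d}\alpha$ is a weighted count of the solutions of $\sum_{i=1}^s p_i^{k_i}=\sum_{i=1}^s q_i^{k_i}$ with $U_{k_i}<p_i,q_i\leqslant 2U_{k_i}$, each solution weighted by $\prod_i(\log p_i)(\log q_i)\ll(\log N)^{2s}\ll N^\varepsilon$. Dropping the primality (primes sit inside the integers of the same ranges) and absorbing the harmless logarithmic weights into $N^\varepsilon$, it suffices to prove that the number $\mathcal{N}(k_1,\dots,k_s)$ of integer solutions of $\sum_{i=1}^s x_i^{k_i}=\sum_{i=1}^s y_i^{k_i}$ with $U_{k_i}<x_i,y_i\leqslant 2U_{k_i}$ satisfies $\mathcal{N}(k_1,\dots,k_s)\ll N^{\frac{1}{k_1}+\cdots+\frac{1}{k_s}+\varepsilon}$.

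I would prove this bound by induction on $s$. For $s=1$ the equation forces $x_1=y_1$, so $\mathcal{N}(k_1)\asymp U_{k_1}\ll N^{1/k_1}$. For the inductive step I single out the pair $(x_1,y_1)$ and set $h=x_1^{k_1}-y_1^{k_1}=\sum_{i=2}^s(y_i^{k_i}-x_i^{k_i})$, splitting the count according to whether $h=0$ or $h\neq0$. When $h=0$ one has $x_1=y_1$, contributing a factor $\ll U_{k_1}\ll N^{1/k_1}$, while the remaining variables solve $\sum_{i=2}^s x_i^{k_i}=\sum_{i=2}^s y_i^{k_i}$; the sub-tuple $(k_2,\dots,k_s)$ inherits exactly the hypotheses indexed by $j=2,\dots,s-1$, so the induction hypothesis bounds this inner count by $N^{\sum_{i\geqslant 2}1/k_i+\varepsilon}$, and the product is admissible.

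When $h\neq0$ I use the divisor bound that, for each fixed $h$, the equation $x^{k_1}-y^{k_1}=h$ has $\ll\tau(|h|)\ll N^\varepsilon$ solutions in the relevant range: $x-y$ divides $x^{k_1}-y^{k_1}=h$, and for each of its $\ll\tau(|h|)$ admissible values the strict monotonicity of $y\mapsto(y+(x-y))^{k_1}-y^{k_1}$ pins down $y$ (here $k_1\geqslant2$ is what is needed). Since $|h|\ll N$, for each of the $\ll\prod_{i\geqslant2}U_{k_i}^2\asymp N^{2\sum_{i\geqslant2}1/k_i}$ choices of $(x_i,y_i)_{i\geqslant2}$ the value of $h$ is fixed and admits $\ll N^\varepsilon$ pairs $(x_1,y_1)$, giving a total contribution $\ll N^{2\sum_{i\geqslant2}1/k_i+\varepsilon}$.

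The one place where the hypothesis is genuinely needed, and where it is used at its sharpest, is this off-diagonal term: the bound $N^{2\sum_{i\geqslant2}1/k_i}$ is acceptable precisely when $2\sum_{i\geqslant2}k_i^{-1}\leqslant k_1^{-1}+\sum_{i\geqslant2}k_i^{-1}$, that is $\sum_{i\geqslant2}k_i^{-1}\leqslant k_1^{-1}$, which is exactly the $j=1$ instance of the diminishing-ranges condition. Thus the hard point is not any single estimate but recognizing that the divisor bound for differences of equal powers, combined with the $j=1$ hypothesis, exactly balances the off-diagonal against the diagonal main term, while the remaining conditions ($j=2,\dots,s-1$) are precisely what keeps the induction alive at each successive stage. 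Alternatively, the statement may simply be quoted from Br\"{u}dern's work, which is its standard source.
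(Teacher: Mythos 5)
Your proposal is correct, but it is worth noting that the paper does not prove this lemma at all: its ``proof'' is a one-line citation to Lemma 1 of Br\"{u}dern (1987), which is exactly the fallback you mention in your last sentence. What you have written out is, in substance, the standard diminishing-ranges argument that lies behind Br\"{u}dern's lemma, and every step of it checks out: orthogonality turns the mean value into a weighted count of solutions of $\sum_i p_i^{k_i}=\sum_i q_i^{k_i}$; positivity of the weights lets you discard primality and absorb the logarithms into $N^{\varepsilon}$; the diagonal case $h=0$ forces $x_1=y_1$ and hands the remaining variables to the inductive hypothesis, which the sub-tuple $(k_2,\dots,k_s)$ indeed inherits; and the off-diagonal case $h\neq 0$ is controlled by the divisor argument ($x_1-y_1$ and $h$ have the same sign, $x_1-y_1\mid h$, and monotonicity of $y\mapsto (y+d)^{k_1}-y^{k_1}$ pins down $y_1$ for each divisor $d$), giving $\ll\tau(|h|)\ll N^{\varepsilon}$ pairs since $|h|\ll N$. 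Your identification of the $j=1$ condition $\sum_{i\geqslant 2}k_i^{-1}\leqslant k_1^{-1}$ as precisely what balances the off-diagonal count $N^{2\sum_{i\geqslant 2}1/k_i}$ against the main term is the right reading of why the hypothesis takes the form it does. Two minor points, both harmless: the $N^{\varepsilon}$ losses accumulate once per induction step, which is fine because $s$ is fixed; and the whole argument uses nothing about primes beyond positivity of the weights, so your proof actually establishes the integer-sum version from which the stated prime version follows. The citation buys the paper brevity; your argument buys self-containedness and makes visible that the lemma is elementary.
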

\begin{proof}
  See Lemma 1 of Br\"{u}dern \cite{Brudern-1987}.
\end{proof}

\begin{lemma}\label{Hua-fuhe}
  For $(a,q)=1$, we have
\begin{eqnarray*}
  &  \emph{(i)} & S_{j}(q,a)\ll q^{1-\frac{1}{j}};   \\
  & \emph{(ii)} & G_{j}(\chi,a)\ll q^{\frac{1}{2}+\varepsilon}.
\end{eqnarray*}
In particular, for $(a,p)=1$, we have
\begin{eqnarray*}
  &  \emph{(iii)} &   |S_{j}(p,a)|\leqslant \big((j,p-1)-1\big)\sqrt{p};    \\
  & \emph{(iv)}   &   |S_{j}^*(p,a)|\leqslant \big((j,p-1)-1\big)\sqrt{p}+1;  \\
  & \emph{(v)}    &   S_{j}^*(p^\ell,a)=0\,\,\, \textrm{for}\,\,\, \ell\geqslant\gamma(p),\,\,\textrm{where}  \\
  &  &  \,\,\gamma(p)=\left\{
                   \begin{array}{ll}
                        \theta+2, & \textrm{if}\,\, p^\theta\|j,\, p\not=2\,\,\textrm{or}\,\, p=2,\,\theta=0,   \\
                        \theta+3, & \textrm{if}\,\, p^{\theta}\|j,\,p=2,\,\,\theta>0.
                   \end{array}
  \right.
\end{eqnarray*}
\end{lemma}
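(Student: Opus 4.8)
The plan is to reduce all five assertions to complete sums over prime-power moduli by multiplicativity, and then to invoke two classical arithmetic inputs --- the absolute value $\sqrt{p}$ of the Gauss sum attached to a nonprincipal character modulo $p$, and Weil's bound for mixed exponential sums --- leaving the prime-power lifting and the vanishing statement to elementary geometric-series manipulations. I would begin by recording the multiplicativity of the sums involved: by the Chinese Remainder Theorem, for $(q_1,q_2)=1$ one has $S_j(q_1q_2,a)=S_j(q_1,a\overline{q_2})S_j(q_2,a\overline{q_1})$, where $\overline{q_2}$ is the inverse of $q_2$ modulo $q_1$, and there is an analogous factorisation of $G_j(\chi,a)$ after writing $\chi=\chi_1\chi_2$. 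Hence it suffices to establish $\mathrm{(i)}$ and $\mathrm{(ii)}$ for $q=p^\ell$ a prime power.

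For the prime-modulus estimate $\mathrm{(iii)}$, I would expand $S_j(p,a)=\sum_{n=0}^{p-1}e(an^j/p)$ by multiplicative characters: for $m\not\equiv0\ (\bmod\,p)$ the number of solutions of $x^j\equiv m$ is $\sum_{\chi^{d}=\chi^0}\chi(m)$ with $d=(j,p-1)$. Substituting and isolating the principal character --- whose contribution is $-1$ and cancels the term $n\equiv0$ --- leaves $S_j(p,a)=\sum_{\chi\neq\chi^0,\,\chi^{d}=\chi^0}\overline{\chi}(a)\tau(\chi)$, a sum of exactly $(j,p-1)-1$ Gauss sums each of modulus $\sqrt{p}$; this gives $\mathrm{(iii)}$. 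Assertion $\mathrm{(iv)}$ is then immediate, since $S_j^*(p,a)=G_j(\chi^0,a)=S_j(p,a)-1$, the $-1$ accounting for the residue $n\equiv0$ omitted by $\chi^0$, so that $|S_j^*(p,a)|\leqslant|S_j(p,a)|+1$.

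For $\mathrm{(i)}$ and $\mathrm{(ii)}$ at prime powers and for the vanishing statement $\mathrm{(v)}$ I would combine a $p$-adic lifting with Weil's estimate. Over $q=p$, part $\mathrm{(ii)}$ is precisely Weil's bound $\big|\sum_{n}\chi(n)e(an^j/p)\big|\leqslant(j-1)\sqrt{p}$; lifting to $p^\ell$ and recombining through multiplicativity absorbs the bounded local factors into $q^\varepsilon$ and yields $G_j(\chi,a)\ll q^{1/2+\varepsilon}$, while $\mathrm{(i)}$ follows from the standard Hua-type evaluation of $S_j(p^\ell,a)$. For $\mathrm{(v)}$ I would write $n=u+p^{\ell-s}v$ with a suitable $s$, so that $an^j\equiv au^j+jau^{j-1}p^{\ell-s}v\ (\bmod\,p^\ell)$; once $\ell$ exceeds the threshold $\gamma(p)$ determined by the exponent $\theta$ with $p^\theta\|j$, the linear term turns the inner sum over $v$ into a complete nontrivial geometric progression, which vanishes.

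The main obstacle is the deep input rather than the bookkeeping: the evaluation $|\tau(\chi)|=\sqrt{p}$ (classical) underlying $\mathrm{(iii)}$ and $\mathrm{(iv)}$, and Weil's bound for the twisted sum underpinning $\mathrm{(ii)}$; everything else --- multiplicativity, the lifting to prime powers, and the geometric-series cancellation in $\mathrm{(v)}$ --- is elementary. Since all five parts are classical, in the write-up I would cite Hua's monograph and Vaughan's book for these technical estimates and present only the short self-contained steps sketched above.
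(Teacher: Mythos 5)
Your proposal is correct and lands exactly where the paper does: the paper's entire proof of this lemma is a citation (Vaughan's Theorem 4.2 and Lemma 4.3 for (i), (iii)--(iv); Hua's Lemmas 8.5 and 8.3 for (ii) and (v), alternatively Vinogradov), and your write-up would likewise defer to those same sources. The arguments you sketch — CRT multiplicativity, reduction of $S_j(p,a)$ to a sum of $(j,p-1)-1$ Gauss sums each of modulus $\sqrt{p}$, the identity $S_j^*(p,a)=S_j(p,a)-1$, and the shift $n=u+p^{\ell-s}v$ with the threshold governed by $p^\theta\|j$ (where tracking $v_p\binom{j}{2}$ is what produces the $p=2$ anomaly in $\gamma(p)$) — are precisely the standard proofs contained in those references.
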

\begin{proof}
  For (i) and (iii)--(iv), see Theorem 4.2 and Lemma 4.3 of Vaughan \cite{Vaughan-book}, respectively. For (ii), see Lemma 8.5 of Hua \cite{Hua-book} or
  the Problem 14 of Chapter VI of Vinogradov \cite{Vinogradov}. For (v), see Lemma 8.3 of Hua \cite{Hua-book}.
\end{proof}

\begin{lemma}\label{f_3-mean-zh}
We have
\begin{eqnarray*}
        \emph{(i)} \quad  \int_0^1 |F_3(\alpha)F_3^{*2}(\alpha)|^2\mathrm{d}\alpha\ll N^{\frac{8}{9}+\varepsilon},
  &  &  \emph{(ii)} \quad \int_0^1 |F_3(\alpha)F_3^*(\alpha)|^4\mathrm{d}\alpha\ll N^{\frac{13}{9}},
                   \nonumber \\
        \emph{(iii)}\quad \int_0^1 |f_3(\alpha)f_3^{*2}(\alpha)|^2\mathrm{d}\alpha\ll N^{\frac{8}{9}+\varepsilon},
  &  &  \emph{(iv)} \quad \int_0^1 |f_3(\alpha)f_3^*(\alpha)|^4\mathrm{d}\alpha\ll N^{\frac{13}{9}}\log^8N.
\end{eqnarray*}
\end{lemma}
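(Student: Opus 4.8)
Throughout I read the four integrals through orthogonality: each one counts (possibly weighted) solutions of a cubic equation in which the long variables run over $(U_3,2U_3]$ and the short ones over $(U_3^*,2U_3^*]$, so my plan is to reduce every estimate to a combination of pure moments of $F_3$ and $F_3^*$ (and, for the prime versions, of $f_3,f_3^*$). I shall use three inputs: the classical sixth‑moment bounds for cubic Weyl sums, $\int_0^1|F_3|^6\,\mathrm{d}\alpha\ll N^{1+\varepsilon}$ and $\int_0^1|F_3^*|^6\,\mathrm{d}\alpha\ll N^{5/6+\varepsilon}$ (each being the statement that $x_1^3+x_2^3+x_3^3=y_1^3+y_2^3+y_3^3$ has $\ll P^{3+\varepsilon}$ solutions with $P=U_3$, resp. $P=U_3^*$), and Hua's bound $\int_0^1|F_3^*|^8\,\mathrm{d}\alpha\ll N^{25/18+\varepsilon}$.

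For (i), since $|F_3F_3^{*2}|^2=|F_3|^2|F_3^*|^4$, I apply H\"older's inequality with exponents $3$ and $\tfrac32$:
\[
\int_0^1|F_3|^2|F_3^*|^4\,\mathrm{d}\alpha\le\Big(\int_0^1|F_3|^6\,\mathrm{d}\alpha\Big)^{1/3}\Big(\int_0^1|F_3^*|^6\,\mathrm{d}\alpha\Big)^{2/3}\ll N^{1/3+\varepsilon}N^{5/9+\varepsilon}=N^{8/9+\varepsilon},
\]
which is exactly the claimed order; the diagonal heuristic $N^{1/3}\cdot N^{5/9}=N^{8/9}$ shows it is sharp. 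Estimate (iii) is identical: passing from integers to primes only restricts the solution set, and the weights $\prod\log p_i\le(\log N)^6$ are absorbed into $N^\varepsilon$, so $\int_0^1|f_3|^2|f_3^*|^4\,\mathrm{d}\alpha\ll N^{8/9+\varepsilon}$.

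For (ii) I expand $\int_0^1|F_3F_3^*|^4\,\mathrm{d}\alpha=\sum_\mu J(\mu)J^*(\mu)$, where $J(\mu)=\#\{n_1^3+n_2^3-n_3^3-n_4^3=\mu\}$ and $J^*(\mu)=\#\{m_1^3+m_2^3-m_3^3-m_4^3=\mu\}$ with $n_i\in(U_3,2U_3]$, $m_j\in(U_3^*,2U_3^*]$. The diagonal term contributes $J(0)J^*(0)\ll N^{2/3+\varepsilon}N^{5/9+\varepsilon}=N^{11/9+\varepsilon}$. The decisive feature is that $J^*(\mu)=0$ unless $|\mu|\le 4(2U_3^*)^3\asymp N^{5/6}$, so in the off-diagonal only the short range of $\mu$ survives, and Cauchy--Schwarz gives
\[
\sum_{\mu\ne0}J(\mu)J^*(\mu)\le\Big(\sum_{0<|\mu|\le CN^{5/6}}J(\mu)^2\Big)^{1/2}\Big(\sum_\mu J^*(\mu)^2\Big)^{1/2},
\]
where $\sum_\mu J^*(\mu)^2=\int_0^1|F_3^*|^8\,\mathrm{d}\alpha\ll N^{25/18+\varepsilon}$. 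I must warn that here the off-diagonal is genuinely of the full order $N^{13/9}$ — already the principal arc $|\alpha|\le N^{-1}$ contributes $U_3^4(U_3^*)^4\cdot N^{-1}\asymp N^{13/9}$ — so the bound has to be captured at its true size; purely soft estimates fall short, since the best H\"older bound for $\int|F_3|^4|F_3^*|^4$ is only $N^{3/2}$ and the symmetric Cauchy--Schwarz with $\int|F_3|^8\ll N^{5/3+\varepsilon}$ only reaches $N^{55/36}$.

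The crux, and the step I expect to be the main obstacle, is therefore the \emph{localized} eighth-moment bound
\[
\sum_{0<|\mu|\le CN^{5/6}}J(\mu)^2\ll N^{3/2+\varepsilon};
\]
restricting the common value of $|F_3|^4$ to $|\mu|\le N^{5/6}$ — a fraction $N^{-1/6}$ of the full range $|\mu|\le N$ — must save precisely the factor $N^{-1/6}$ off $\int_0^1|F_3|^8\ll N^{5/3+\varepsilon}$. I would prove it by inserting a Fej\'er kernel $V$ of width $N^{-5/6}$, so that $\sum_{|\mu|\le CN^{5/6}}J(\mu)^2\ll\int_0^1\!\int_0^1|F_3(\beta)|^4|F_3(\gamma)|^4V(\gamma-\beta)\,\mathrm{d}\beta\,\mathrm{d}\gamma$, and then running a Hardy--Littlewood dissection for $F_3$: the contribution of $\beta,\gamma$ in a common major arc about a rational with small denominator produces the main term $N^{5/6}(\int_{|\beta|\le N^{-5/6}}|F_3|^4)^2\asymp N^{5/6}\cdot N^{2/3}=N^{3/2}$, while the minor-arc contribution is smaller by Weyl's inequality (the requisite bounds on $S_3(q,a)$ being furnished by Lemma \ref{Hua-fuhe}). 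Granting this, the displayed Cauchy--Schwarz yields $\sum_{\mu\ne0}J(\mu)J^*(\mu)\ll N^{3/4}N^{25/36}=N^{13/9+\varepsilon}$, which with the diagonal proves (ii) (a clean $\ll N^{13/9}$ alternatively follows by evaluating the integral directly, the major arcs supplying the main term $\asymp N^{13/9}$ with convergent singular series and the minor arcs being negligible). Finally (iv) follows from (ii) upon discarding primality and extracting the weight $\prod\log p_i\le(\log N)^8$, giving $\int_0^1|f_3f_3^*|^4\,\mathrm{d}\alpha\ll N^{13/9}\log^8N$.
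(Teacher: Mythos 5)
Your reductions of (iii) and (iv) to the integer counts (i) and (ii) are fine and agree with the paper, but both of your arguments for the integer-sum estimates have genuine gaps, the first one fatal. The proof of (i) (and hence (iii)) rests on the inputs $\int_0^1|F_3|^6\,\mathrm{d}\alpha\ll N^{1+\varepsilon}$ and $\int_0^1|F_3^*|^6\,\mathrm{d}\alpha\ll N^{5/6+\varepsilon}$, which you present as classical sixth-moment bounds. They are not theorems: by orthogonality each one asserts that $x_1^3+x_2^3+x_3^3=y_1^3+y_2^3+y_3^3$ has $\ll P^{3+\varepsilon}$ solutions in a box of side $P$, which is a famous open conjecture (known only conditionally, e.g.\ under Hooley's Hasse--Weil hypothesis). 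Unconditionally, interpolating Hua's inequalities $\int_0^1|F|^4\ll P^{2+\varepsilon}$ and $\int_0^1|F|^8\ll P^{5+\varepsilon}$ gives only $\int_0^1|F|^6\ll P^{7/2+\varepsilon}$, and no known refinement approaches exponent $3$. Feeding the known bounds into your H\"older step yields $\int_0^1|F_3|^2|F_3^*|^4\,\mathrm{d}\alpha\ll N^{7/18}\cdot N^{35/54}=N^{28/27+\varepsilon}$, far weaker than the target $N^{8/9+\varepsilon}$. This is exactly why the paper quotes Vaughan's theorem \cite{Vaughan-1985}: the entire content of that paper is that the \emph{asymmetric} moment (one sum of length $N^{1/3}$ against four of length $N^{5/18}$) can be brought down to the diagonal size $N^{8/9+\varepsilon}$ without the sixth-moment conjecture. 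Your H\"older step symmetrizes the problem and thereby erases precisely the feature that makes (i) provable.

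For (ii) you correctly flag the localized eighth-moment bound $\sum_{0<|\mu|\leqslant CN^{5/6}}J(\mu)^2\ll N^{3/2+\varepsilon}$ as the crux, but the sketch does not establish it. Splitting $J(\mu)=J_{\mathfrak{M}}(\mu)+J_{\mathfrak{m}}(\mu)$ along any dissection, the restriction $|\mu|\leqslant CN^{5/6}$ does not interact with the minor-arc piece: Bessel's inequality controls $\sum_{|\mu|\leqslant CN^{5/6}}J_{\mathfrak{m}}(\mu)^2$ only through $\int_{\mathfrak{m}}|F_3|^8\,\mathrm{d}\alpha$, and since Weyl's inequality saturates at $\sup_{\mathfrak{m}}|F_3|\ll N^{1/4+\varepsilon}$, the best bound your tools give is $\sup_{\mathfrak{m}}|F_3|^4\int_0^1|F_3|^4\,\mathrm{d}\alpha\ll N^{1+\varepsilon}\cdot N^{2/3+\varepsilon}=N^{5/3+\varepsilon}$, which exceeds the needed $N^{3/2}$. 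Moreover (ii) carries no $\varepsilon$ at all, so even a repaired $\varepsilon$-lossy version would not deliver the lemma as stated (and then (iv), with only $\log^8N$, would fail as well). The standard route --- this is what Cai's Lemma 2.4 \cite{Cai}, cited by the paper, does --- is to dissect $\int_0^1|F_3F_3^*|^4\,\mathrm{d}\alpha$ directly and to use (i) on the minor arcs: $\int_{\mathfrak{m}}|F_3|^4|F_3^*|^4\,\mathrm{d}\alpha\leqslant\sup_{\mathfrak{m}}|F_3|^2\int_0^1|F_3|^2|F_3^*|^4\,\mathrm{d}\alpha\ll N^{1/2+\varepsilon}\cdot N^{8/9+\varepsilon}=N^{25/18+2\varepsilon}$, safely below $N^{13/9}$, while the major arcs contribute $\asymp N^{13/9}$. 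In other words, (ii) follows from (i) plus Weyl's inequality and a routine major-arc computation; the deep point is (i) itself, and it cannot be obtained the way you propose.
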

\begin{proof}
  For (i), one can see the Theorem of Vaughan \cite{Vaughan-1985}, and for (ii), one can see Lemma 2.4 of Cai \cite{Cai}. Moreover, (iii) and (iv) follow from (i) and (ii) by considering the number of solutions of the underlying Diophantine equations, respectively.
\end{proof}

\begin{lemma}\label{W(a)-Delta3}
   For $\alpha=\frac{a}{q}+\beta$, define
\begin{equation}\label{N(q,a)-def}
   \mathfrak{N}(q,a)=\bigg(\frac{a}{q}-\frac{1}{qQ_0},\frac{a}{q}+\frac{1}{qQ_0}\bigg],
\end{equation}
\begin{equation}\label{Delta_4-def}
   \Delta_4(\alpha)=f_4(\alpha)-\frac{S_4^*(q,a)}{\varphi(q)}\sum_{U_4<n\leqslant 2U_4}e(\beta n^4),
\end{equation}
\begin{equation}\label{W(alpha)-def}
   W(\alpha)=\sum_{d\leqslant D}\frac{c(d)}{dq}S_2(q,ad^2)v_2(\beta),
\end{equation}
where
\begin{equation*}
 c(d)=\sum_{\substack{d=mn\\ m\leqslant D^{2/3}\\ n\leqslant D^{1/3}}}a(m)b(n)\ll\tau(d).
\end{equation*}
Then we have
\begin{equation}\label{W(a)-Delta-mean-square}
   \sum_{1\leqslant q\leqslant Q_0}\sum_{\substack{a=-q\\(a,q)=1}}^{2q}\int_{\mathfrak{N}(q,a)}\big|W(\alpha)\Delta_4(\alpha)\big|^2\mathrm{d}\alpha
   \ll N^{\frac{1}{2}}\log^{-100A}N
\end{equation}
and
\begin{equation}\label{W(a)-mean-square}
   \sum_{1\leqslant q\leqslant Q_0}\sum_{\substack{a=-q\\(a,q)=1}}^{2q}\int_{\mathfrak{N}(q,a)}\big|W(\alpha)\big|^2\mathrm{d}\alpha
   \ll \log^{21A}N.
\end{equation}
\end{lemma}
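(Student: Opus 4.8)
The plan is to exploit the factorization $W(\alpha)=v_2(\beta)\,T(q,a)$ valid on each arc $\mathfrak{N}(q,a)$, where $T(q,a)=q^{-1}\sum_{d\leqslant D}c(d)d^{-1}S_2(q,ad^2)$ carries all the arithmetic information and $v_2(\beta)$ all the analytic one. Two elementary ingredients then suffice for (\ref{W(a)-mean-square}). First, writing $v_2(\beta)=\int_{U_2}^{2U_2}e(\beta u^2)\,\mathrm{d}u$ and applying Lemma \ref{Titchmarsh-lemma-42} with $F(u)=2\pi\beta u^2$ gives $v_2(\beta)\ll U_2(1+N|\beta|)^{-1/2}$, whence $\int_{\mathfrak{N}(q,a)}|v_2(\beta)|^2\,\mathrm{d}\beta\ll N^{-1}U_2^2\log N\ll\log N$. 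Second, the Gauss-sum bound $|S_2(q,ad^2)|\ll q^{1/2}(q,d^2)^{1/2}$ (cf. Lemma \ref{Hua-fuhe}) together with $c(d)\ll\tau(d)$ yields $|T(q,a)|\ll q^{-1/2}\sum_{d\leqslant D}\tau(d)d^{-1}(q,d^2)^{1/2}\ll q^{-1/2+\varepsilon}\log^2 N$. Crucially, \emph{no} cancellation in $a$ is required: summing $|T(q,a)|^2\ll q^{-1+\varepsilon}\log^4 N$ over the $\ll q$ admissible residues and then over $q\leqslant Q_0$ gives $\sum_{1\leqslant q\leqslant Q_0}\sum_{a}|T(q,a)|^2\ll Q_0^{1+\varepsilon}\log^4 N\ll\log^{21A-1}N$, because the factor $q^{-1}$ from $T$ exactly balances the trivial $a$-summation. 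Multiplying by the $\beta$-integral bound proves (\ref{W(a)-mean-square}).

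For (\ref{W(a)-Delta-mean-square}) the same factorization gives
\begin{equation*}
\sum_{1\leqslant q\leqslant Q_0}\sum_{\substack{a=-q\\(a,q)=1}}^{2q}\int_{\mathfrak{N}(q,a)}\big|W(\alpha)\Delta_4(\alpha)\big|^2\,\mathrm{d}\beta=\sum_{1\leqslant q\leqslant Q_0}\sum_{\substack{a=-q\\(a,q)=1}}^{2q}|T(q,a)|^2\int_{\mathfrak{N}(q,a)}\big|v_2(\beta)\Delta_4(\alpha)\big|^2\,\mathrm{d}\beta,
\end{equation*}
so in view of the arithmetic bound already obtained it suffices to control the inner integral. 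I would dissect $\mathfrak{N}(q,a)$ into a core $|\beta|\leqslant N^{-1}$ and a tail $N^{-1}<|\beta|\leqslant(qQ_0)^{-1}$. On the core the phase $e(\beta t^4)$ is essentially constant, so Lemma \ref{Titchmarsh-lemma-48} identifies $\sum_{U_4<n\leqslant 2U_4}e(\beta n^4)$ with $v_4(\beta)$ up to $O(1)$; since $q\leqslant Q_0=\log^{20A}N$, the Siegel--Walfisz theorem supplies the classical major-arc approximation $f_4(\alpha)=\varphi(q)^{-1}S_4^*(q,a)v_4(\beta)+O\big(U_4\exp(-c\sqrt{\log N})\big)$, so that $\Delta_4(\alpha)\ll U_4\exp(-c\sqrt{\log N})$ throughout the core. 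As $\int_{|\beta|\leqslant N^{-1}}|v_2(\beta)|^2\,\mathrm{d}\beta\ll1$, the core contributes $\ll U_4^2\exp(-2c\sqrt{\log N})\ll N^{1/2}\log^{-C}N$ for every fixed $C$.

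On the tail the weight decays, $|v_2(\beta)|^2\ll|\beta|^{-1}$, but $\Delta_4$ is no longer pointwise small and must be handled in mean square. Expanding $\int_{|\beta|\leqslant\eta}|\Delta_4(\alpha)|^2\,\mathrm{d}\beta$ over the underlying equation $k_1^4=k_2^4$ (with the prime phases $e(k^4a/q)$ attached to $f_4$), only the diagonal survives to leading order, giving $\ll\eta U_4\log N$; a dyadic decomposition in $\eta$, weighted by $|\beta|^{-1}$, then bounds the diagonal part of the tail by $\ll U_4\log^2 N\ll N^{1/4}\log^2 N$, comfortably below the target. The off-diagonal terms $k_1\neq k_2$ are far more delicate: estimated in absolute value they would contribute a quantity of order $N^{1/2}$ coming from the inner edge $|\beta|\asymp N^{-1}$, so their cancellation cannot be discarded.

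The main obstacle is precisely this off-diagonal part of the tail. The required saving can come neither from the size of $v_2$ alone nor from any pointwise bound on $\Delta_4$ (whose error genuinely grows like $1+|\beta|N$ towards the boundary and which carries the full sum--integral defect $v_4(\beta)-\sum_{U_4<n\leqslant 2U_4}e(\beta n^4)$); it must be extracted from cancellation. The natural route is that the off-diagonal correlations of the prime sum $f_4$ are modelled, frequency by frequency, by those of the subtracted main term $\varphi(q)^{-1}S_4^*(q,a)\sum_{U_4<n\leqslant 2U_4}e(\beta n^4)$, the residual discrepancy carrying phases $e(ma/q)$ that collapse, upon summation over $a$, into complete exponential sums estimable through Lemma \ref{Hua-fuhe}. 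Arranging the core/tail split so that Siegel--Walfisz governs exactly the range of pointwise smallness while the $a$-summation recovers the saving elsewhere is the crux; once the diagonal is isolated and the $a$-sum performed, inserting $\sum_{q,a}|T(q,a)|^2\ll\log^{O(1)}N$ reduces everything to routine applications of the mean-value bounds above and delivers $\ll N^{1/2}\log^{-100A}N$.
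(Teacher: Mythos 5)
The paper itself does not prove this lemma at all --- both estimates are quoted from Lemmas 2.4 and 2.5 of Li and Cai \cite{Li-Cai} --- so your attempt is a from-scratch reconstruction. Your proof of the second estimate (\ref{W(a)-mean-square}) is complete and correct: the factorization $W(\alpha)=T(q,a)v_2(\beta)$ on $\mathfrak{N}(q,a)$, the Gauss-sum bound $|S_2(q,ad^2)|\ll q^{1/2}(q,d^2)^{1/2}$ giving $|T(q,a)|\ll q^{-1/2+\varepsilon}\log^2N$ (this is exactly the paper's own pointwise bound (\ref{W(a)-upper})), and trivial summation over $a$ and $q\leqslant Q_0=\log^{20A}N$ do yield $\ll\log^{21A}N$. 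The genuine gap is that the first and harder estimate (\ref{W(a)-Delta-mean-square}) --- the one actually needed to control $\mathcal{I}_1$ in Proposition \ref{mean-value-theorem-1} --- is never proved. Your closing paragraph concedes as much: the off-diagonal contribution on the tail is labelled ``the crux'', and the proposed remedy (phases collapsing under $a$-summation into complete sums estimable by Lemma \ref{Hua-fuhe}) is described but never executed; no bound for those terms is derived. Your diagnosis of the failure is accurate --- with your cut at $|\beta|\leqslant N^{-1}$, absolute values on the off-diagonal terms at the inner edge give $N^{1/2}\log^{O(1)}N$, which misses the target $N^{1/2}\log^{-100A}N$ by powers of $\log N$ --- but a correct diagnosis of an incomplete argument is not a proof.

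What you missed is that the obstruction is self-inflicted: it comes from placing the core/tail cut at $|\beta|=N^{-1}$, not from any genuine need for cancellation. Put the cut instead at $\eta_0=N^{-1}\exp\big(\tfrac{c}{2}\sqrt{\log N}\big)$, where $c$ is the Siegel--Walfisz constant. On the enlarged core $|\beta|\leqslant\eta_0$ your own argument survives verbatim: the partial-summation error is $\ll(1+|\beta|N)U_4\exp(-c\sqrt{\log N})\ll U_4\exp(-\tfrac{c}{2}\sqrt{\log N})$, and Lemma \ref{Titchmarsh-lemma-48} still applies since $|\beta|U_4^3\ll N^{-1/4}\exp(\tfrac{c}{2}\sqrt{\log N})=o(1)$, so the core contributes $\ll N^{1/2}\exp(-c'\sqrt{\log N})$. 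On the shrunken tail $\eta_0<|\beta|\leqslant(qQ_0)^{-1}$, the absolute-value bounds you already have now suffice, because the weight supplies the saving:
\begin{equation*}
   \sup_{|\beta|\geqslant\eta_0}|v_2(\beta)|^2\cdot\log^2N\sum_{k_1\neq k_2}\frac{1}{|k_1^4-k_2^4|}
   \ll\frac{U_2^2}{(\eta_0N)^2}\cdot\frac{\log^3N}{U_4^{2}}
   \ll N^{\frac{1}{2}}\exp\big(-c\sqrt{\log N}\big)\log^3N,
\end{equation*}
while your diagonal bound $\ll\eta U_4\log^2N$, summed dyadically against $|v_2|^2\ll U_2^2(\eta N)^{-2}$, gives $\ll N^{1/4}\exp(-\tfrac{c}{2}\sqrt{\log N})\log^2N$. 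Multiplying by $\sum_{q\leqslant Q_0}\sum_{a}|T(q,a)|^2\ll\log^{21A}N$ leaves every piece $\ll N^{1/2}\log^{-100A}N$, since $\exp(-c'\sqrt{\log N})$ dominates any fixed power of $\log N$. In short, the correct principle is to choose the cut so that the Siegel--Walfisz saving on the core and the decay $(1+|\beta|N)^{-2}$ of $|v_2|^2$ off it are both of size $\exp(-c'\sqrt{\log N})$; then no cancellation over $a$, and no tool beyond those you already invoked, is needed, and the lemma follows.
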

\begin{proof}
 For (\ref{W(a)-Delta-mean-square}) and (\ref{W(a)-mean-square}), one can refer to Lemma 2.4 and Lemma 2.5 of Li and Cai \cite{Li-Cai}, respectively.
\end{proof}

\begin{lemma}\label{V_6(alpha)-mean-square}
  For $\alpha=\frac{a}{q}+\beta$, define
   \begin{equation}\label{V_k-def}
      V_k(\alpha)=\frac{S_k^*(q,a)}{\varphi(q)}v_k(\beta),
   \end{equation}
 Then we have
 \begin{equation}\label{V-6-mean-square}
   \sum_{1\leqslant q\leqslant Q_0}\sum_{\substack{a=-q\\(a,q)=1}}^{2q}\int_{\mathfrak{N}(q,a)}\big|V_4(\alpha)\big|^2\mathrm{d}\alpha
   \ll N^{-\frac{1}{2}}\log^{21A}N,
\end{equation}
where $\mathfrak{N}(q,a)$ is defined by (\ref{N(q,a)-def}).
\end{lemma}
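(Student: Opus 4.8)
The plan is to exploit the fact that in $V_4(\alpha)=\frac{S_4^*(q,a)}{\varphi(q)}v_4(\beta)$ the arithmetic factor and the analytic factor separate completely. On each arc $\mathfrak{N}(q,a)$ one writes $\alpha=\frac{a}{q}+\beta$ with $|\beta|\leqslant\frac{1}{qQ_0}$ and $\mathrm{d}\alpha=\mathrm{d}\beta$, so that the Gauss--sum factor is constant on the arc and
\begin{equation*}
  \int_{\mathfrak{N}(q,a)}\big|V_4(\alpha)\big|^2\,\mathrm{d}\alpha=\frac{|S_4^*(q,a)|^2}{\varphi^2(q)}\int_{-1/(qQ_0)}^{1/(qQ_0)}\big|v_4(\beta)\big|^2\,\mathrm{d}\beta .
\end{equation*}
The two factors can then be estimated independently: a pointwise bound for $v_4$ feeds the $\beta$--integral, while Lemma \ref{Hua-fuhe} controls the exponential sum.

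For the analytic factor I would first record the two elementary bounds for $v_4(\beta)=\int_{U_4}^{2U_4}e(\beta u^4)\,\mathrm{d}u$: trivially $|v_4(\beta)|\leqslant U_4$, and by Lemma \ref{Titchmarsh-lemma-42} applied to $F(u)=2\pi\beta u^4$, whose derivative $8\pi\beta u^3$ is monotone and $\asymp|\beta|U_4^3$ on $[U_4,2U_4]$, one gets $|v_4(\beta)|\ll(|\beta|U_4^3)^{-1}$. Hence $|v_4(\beta)|\ll\min\big(U_4,(|\beta|U_4^3)^{-1}\big)$. Splitting the range at $|\beta|=U_4^{-4}$ and integrating the square then yields
\begin{equation*}
  \int_{-\infty}^{+\infty}\big|v_4(\beta)\big|^2\,\mathrm{d}\beta\ll U_4^{-2}\ll N^{-\frac12},
\end{equation*}
so that the truncated integral over $|\beta|\leqslant 1/(qQ_0)$ is $\ll N^{-1/2}$ uniformly in $q$. (Alternatively, the substitution $w=u^4$ together with Plancherel's identity gives the same bound.)

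For the arithmetic factor, Lemma \ref{Hua-fuhe}(ii) gives $S_4^*(q,a)=G_4(\chi^0,a)\ll q^{\frac12+\varepsilon}$, hence $|S_4^*(q,a)|^2\ll q^{1+\varepsilon}$. Since there are $\ll\varphi(q)$ values of $a$ in $[-q,2q]$ coprime to $q$, and since $q/\varphi(q)\ll q^{\varepsilon}$, we obtain
\begin{equation*}
  \sum_{\substack{a=-q\\(a,q)=1}}^{2q}\frac{|S_4^*(q,a)|^2}{\varphi^2(q)}\ll\frac{\varphi(q)\,q^{1+\varepsilon}}{\varphi^2(q)}=\frac{q^{1+\varepsilon}}{\varphi(q)}\ll q^{\varepsilon}.
\end{equation*}

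Finally I would assemble the pieces: inserting the uniform bound $N^{-1/2}$ for the $\beta$--integral and summing the arithmetic factor over $q\leqslant Q_0$ gives
\begin{equation*}
  \sum_{1\leqslant q\leqslant Q_0}\sum_{\substack{a=-q\\(a,q)=1}}^{2q}\int_{\mathfrak{N}(q,a)}\big|V_4(\alpha)\big|^2\,\mathrm{d}\alpha\ll N^{-\frac12}\sum_{q\leqslant Q_0}q^{\varepsilon}\ll N^{-\frac12}Q_0^{1+\varepsilon}\ll N^{-\frac12}\log^{21A}N,
\end{equation*}
the last step using $Q_0=\log^{20A}N$ and $20A(1+\varepsilon)\leqslant 21A$ for $\varepsilon$ sufficiently small. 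This is the claimed estimate. The computation is entirely routine; the only point requiring care is keeping the power of $\log N$ produced by the $q$--summation within $21A$, which is precisely why the crude Gauss--sum bound of Lemma \ref{Hua-fuhe}(ii) already suffices and no sharper arithmetic input is needed.
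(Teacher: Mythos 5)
Your proof is correct, but it is worth noting that the paper itself contains no argument for this lemma at all: the authors simply cite (2.12) of Li and Cai \cite{Li-Cai}. What you have produced is therefore a complete, self-contained verification of the cited estimate, and it follows the standard route one would expect that reference to take. Each step checks out: the integral over $\mathfrak{N}(q,a)$ does factor exactly as $\frac{|S_4^*(q,a)|^2}{\varphi^2(q)}\int_{|\beta|\leqslant 1/(qQ_0)}|v_4(\beta)|^2\,\mathrm{d}\beta$ since the Gauss-sum factor is constant on each arc; the bound $|v_4(\beta)|\ll\min\big(U_4,(|\beta|U_4^3)^{-1}\big)$ follows from the trivial estimate together with Lemma \ref{Titchmarsh-lemma-42} (the phase $F(u)=2\pi\beta u^4$ indeed has monotone derivative of size $\gg|\beta|U_4^3$ on $[U_4,2U_4]$), and splitting at $|\beta|=U_4^{-4}$ gives $\int_{\mathbb{R}}|v_4(\beta)|^2\,\mathrm{d}\beta\ll U_4^{-2}\asymp N^{-1/2}$, uniformly absorbing the truncated integral; the arithmetic sum is handled correctly via $S_4^*(q,a)=G_4(\chi^0,a)\ll q^{1/2+\varepsilon}$ from Lemma \ref{Hua-fuhe}(ii), the count of $\ll\varphi(q)$ admissible residues $a$ in an interval of length $3q$, and $q/\varphi(q)\ll q^{\varepsilon}$; and the final bookkeeping $Q_0^{1+\varepsilon}=\log^{20A(1+\varepsilon)}N\leqslant\log^{21A}N$ is valid for $\varepsilon\leqslant 1/20$, consistent with the paper's convention that $\varepsilon$ is arbitrarily small. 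One could even sharpen the log power by replacing $q^{\varepsilon}$ with $\log\log q$ throughout, but that is unnecessary here. In short: your argument is a legitimate proof of the lemma, arguably more useful to a reader than the paper's bare citation, and it requires no input beyond Lemmas \ref{Titchmarsh-lemma-42} and \ref{Hua-fuhe}, both of which the paper already provides.
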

\begin{proof}
  See (2.12) of Li and Cai \cite{Li-Cai}.
\end{proof}

For $(a,q)=1,\,1\leqslant a\leqslant q\leqslant Q_2$, set
\begin{equation*}
  \mathfrak{M}(q,a)=\bigg(\frac{a}{q}-\frac{1}{qQ_2},\frac{a}{q}+\frac{1}{qQ_2}\bigg],\qquad\quad
  \mathfrak{M}=\bigcup_{1\leqslant q\leqslant Q_0^5}\bigcup_{\substack{1\leqslant a\leqslant q\\(a,q)=1}}\mathfrak{M}(q,a),
\end{equation*}
\begin{equation*}
  \mathfrak{M}_0(q,a)=\bigg(\frac{a}{q}-\frac{Q_0}{N},\frac{a}{q}+\frac{Q_0}{N}\bigg],\qquad\quad
  \mathfrak{M}_0=\bigcup_{1\leqslant q\leqslant Q_0^5}\bigcup_{\substack{1\leqslant a\leqslant q\\(a,q)=1}}\mathfrak{M}_0(q,a),
\end{equation*}
\begin{equation*}
  \mathfrak{I}_0=\bigg(-\frac{1}{Q_2},1-\frac{1}{Q_2}\bigg],\quad\quad \qquad \mathfrak{m}_0=\mathfrak{M}\setminus \mathfrak{M}_0,
\end{equation*}
\begin{equation*}
  \mathfrak{m}_1=\bigcup_{Q_0^5<q\leqslant Q_1}\bigcup_{\substack{1\leqslant a\leqslant q\\(a,q)=1}}\mathfrak{M}(q,a),\quad\quad\qquad
  \mathfrak{m}_2=\mathfrak{I}_0\setminus (\mathfrak{M}\cup \mathfrak{m}_1).
\end{equation*}
Then we get the Farey dissection
\begin{equation}\label{Farey-dissection}
  \mathfrak{I}_0=\mathfrak{M}_0\cup\mathfrak{m}_0\cup\mathfrak{m}_1\cup\mathfrak{m}_2.
\end{equation}

\begin{lemma}\label{f_k-tihuan}
 For $\alpha=\frac{a}{q}+\beta$, define
\begin{equation*}
      V_3^*(\alpha)=\frac{S_3^*(q,a)}{\varphi(q)}v_3^*(\beta).
   \end{equation*}
Then $\alpha=\frac{a}{q}+\beta\in\mathfrak{M}_0$, we have
   \begin{equation}\label{f_3=V_3+O}
      f_k(\alpha)=V_k(\alpha)+O\big(U_k\exp(-\log^{1/3}N)\big),
   \end{equation}
   \begin{equation}\label{f_3^*=W_3+O}
      f_3^*(\alpha)=V_3^*(\alpha)+O\big(U_3^*\exp(-\log^{1/3}N)\big),
   \end{equation}
   \begin{equation}\label{g_r=c_rV_2+O}
     g_r(\alpha)=\frac{c_r(b)V_2(\alpha)}{\log U_2}+O\big(U_2\exp(-\log^{1/3}N)\big),
   \end{equation}
where $V_k(\alpha)$ is defined (\ref{V_k-def}), and
\begin{align}\label{c_r-def}
   c_r(b) = & (1+O(\varepsilon)) \nonumber  \\
   & \times \int_{r-1}^{\frac{73b-36}{36-b}}\frac{\mathrm{d}t_1}{t_1}\int_{r-2}^{t_1-1}\frac{\mathrm{d}t_2}{t_2}\cdots
         \int_{3}^{t_{r-4}-1}\frac{\mathrm{d}t_{r-3}}{t_{r-3}}\int_{2}^{t_{r-3}-1}\frac{\log (t_{r-2}-1)}{t_{r-2}}\mathrm{d}t_{r-2}.
\end{align}
\end{lemma}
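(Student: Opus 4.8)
The plan is to derive all three estimates from a single engine. On $\mathfrak{M}_0$ one has $q\leqslant Q_0^5=\log^{100A}N$ and $|\beta|\leqslant Q_0/N$, which is exactly the range in which the Siegel--Walfisz form of the prime number theorem in progressions, $\psi(x;q,h)=x/\varphi(q)+O\big(x\exp(-c\sqrt{\log x})\big)$ for $(h,q)=1$ and $q\leqslant\log^{100A}N$, applies with a saving beating every power of $\log N$. Since $\exp(-c\sqrt{\log N})\leqslant\exp(-\log^{1/3}N)$ for large $N$, and the factor introduced by partial summation against $e(\beta t^k)$ is only $\ll|\beta|N\leqslant Q_0$, every polylogarithmic loss is absorbed into the stated error.

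For (\ref{f_3=V_3+O}) I would write $f_k(\alpha)=\sum_{U_k<p\leqslant2U_k}(\log p)e(p^ka/q)e(p^k\beta)$, discard the primes $p\mid q$ (contributing $O(\log^2N)$), and for each reduced residue $h\bmod q$ replace $e(p^ka/q)$ by $e(h^ka/q)$. The prime number theorem in progressions together with partial summation against $e(\beta t^k)$ turns the inner sum over $p\equiv h\ (q)$ into $\varphi(q)^{-1}\int_{U_k}^{2U_k}e(\beta t^k)\,\mathrm{d}t=\varphi(q)^{-1}v_k(\beta)$ up to the admissible error; summing over $h$ gives $\sum_{(h,q)=1}e(h^ka/q)=S_k^*(q,a)$, hence the main term $V_k(\alpha)$. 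Estimate (\ref{f_3^*=W_3+O}) is identical, the range $U_3^*<p\leqslant2U_3^*$ producing $v_3^*(\beta)$ and $S_3^*(q,a)$.

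For (\ref{g_r=c_rV_2+O}) I would first interchange to write $g_r(\alpha)=\sum_{\ell\in\mathscr{N}_r}\frac{1}{\log(U_2/\ell)}\sum_{U_2/\ell<p\leqslant2U_2/\ell}(\log p)e(\alpha(\ell p)^2)$. Every prime factor of $\ell$ exceeds $z=D^{1/3}$, a positive power of $N$, so $(\ell,q)=1$, and for fixed $\ell$ the inner sum is treated exactly as in (\ref{f_3=V_3+O}): sorting $p$ into reduced residues, applying the prime number theorem in progressions, and substituting $u=\ell t$ in $\int_{U_2/\ell}^{2U_2/\ell}e(\beta\ell^2t^2)\,\mathrm{d}t=\ell^{-1}v_2(\beta)$, while the reduced-residue sum collapses to $\sum_{(h,q)=1}e(a(\ell h)^2/q)=S_2^*(q,a)$ since $\ell$ is invertible modulo $q$. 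Thus the inner sum is $\ell^{-1}V_2(\alpha)$ with error $O\big(\ell^{-1}U_2\exp(-c\sqrt{\log N})\big)$; the constraint built into $\mathscr{N}_r$ forces $\ell\leqslant2U_2/z$, so $\log(U_2/\ell)\gg\log z\asymp\log N$ and $\sum_{\ell}\ell^{-1}\ll1$, whence the errors total $O\big(U_2\exp(-\log^{1/3}N)\big)$. This reduces the claim to the arithmetic identity
\[
   \sum_{\ell\in\mathscr{N}_r}\frac{1}{\ell\log(U_2/\ell)}=\frac{c_r(b)}{\log U_2}\big(1+O(\varepsilon)\big).
\]

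The evaluation of this sum is the crux and the main obstacle. Parametrising each prime by $s_i=\log p_i/\log z\geqslant1$ turns the ordering into $1\leqslant s_1\leqslant\cdots\leqslant s_{r-1}$, turns the constraint $p_1\cdots p_{r-2}p_{r-1}^2\leqslant2U_2$ into $s_1+\cdots+s_{r-2}+2s_{r-1}\leqslant\frac{\log2U_2}{\log z}=\frac{72b}{36-b}+O(\varepsilon)$, and gives $\log(U_2/\ell)=\log z\,\big(\frac{72b}{36-b}-\sum_is_i+O(\varepsilon)\big)$; note $\frac{72b}{36-b}-1=\frac{73b-36}{36-b}$ is precisely the upper limit appearing in (\ref{c_r-def}). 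By Mertens' theorem each summation $\sum_pp^{-1}$ becomes $\int\mathrm{d}s/s$, and peeling the primes off one at a time in increasing order converts the ordered $(r-1)$-fold sum into an $(r-2)$-fold iterated integral, the summation over the smallest prime producing (again by Mertens) the factor $\log(t_{r-2}-1)$, while the ordering and the constraint fix the nested limits $t_1\in(r-1,\tfrac{73b-36}{36-b})$ and $t_j\in(r-j,t_{j-1}-1)$. The delicate points are to verify that the weight $\log(U_2/\ell)^{-1}$ together with the squared largest prime removes exactly one dimension and leaves precisely the integrand and shifted limits of (\ref{c_r-def}), and to check that the errors from the Mertens approximations, compounded over the $r-2\geqslant4$ nested variables, stay of relative size $O(\varepsilon)$, which is absorbed together with the $\varepsilon$ hidden in $D$ (hence in $z$); these computations parallel those of L\"u--Mu \cite{Lu-Mu} and Li--Cai \cite{Li-Cai}.
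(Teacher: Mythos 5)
Your proposal follows essentially the same route as the paper's proof: on $\mathfrak{M}_0$ the paper likewise sorts $\ell p$ into reduced residue classes, applies the Siegel--Walfisz theorem with partial summation to reduce $g_r(\alpha)$ to $\frac{S_2^*(q,a)}{\varphi(q)}v_2(\beta)\sum_{\ell\in\mathscr{N}_r}\frac{1}{\ell\log(U_2/\ell)}$ plus an error $O\big(U_2\exp(-\log^{1/3}N)\big)$, identifies that sum with $c_r(b)/\log U_2$, and notes that (\ref{f_3=V_3+O}) and (\ref{f_3^*=W_3+O}) follow by the same but simpler argument. Since the paper asserts the final identification $\sum_{\ell\in\mathscr{N}_r}\frac{1}{\ell\log(U_2/\ell)}=\frac{c_r(b)}{\log U_2}$ without detail, your Mertens-based sketch of that evaluation is, if anything, more explicit than the published proof.
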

\begin{proof}
  By Siegel--Walfisz theorem and partial summation, we obtain
\begin{eqnarray}
  g_r(\alpha) & = & \sum_{\substack{\ell\in\mathscr{N}_r \\ \ell p\in\mathcal{L} }}
        e\bigg(\Big(\frac{a}{q}+\beta\Big)(\ell p)^2\bigg)\frac{\log p}{\log\frac{U_2}{\ell}}
              \nonumber \\
    & = & \sum_{\substack{h=1\\(h,q)=1}}^q e\bigg(\frac{ah^2}{q}\bigg)\sum_{\ell\in\mathscr{N}_r}\frac{1}{\log\frac{U_2}{\ell}}
          \sum_{\substack{\frac{U_2}{\ell}<p\leqslant\frac{2U_2}{\ell} \\ \ell p\equiv h \!\!\!\!\! \pmod q }} (\log p)e\big(\beta(\ell p)^2\big)
             \nonumber \\
    & = & \sum_{\substack{h=1\\(h,q)=1}}^q e\bigg(\frac{ah^2}{q}\bigg)\sum_{\ell\in\mathscr{N}_r}\frac{1}{\log\frac{U_2}{\ell}}
          \int_{\frac{U_2}{\ell}}^{\frac{2U_2}{\ell}} e\big(\beta(\ell\nu)^2\big) \mathrm{d}
          \Bigg(\sum_{\substack{p\leqslant\nu \\ p\equiv h\bar{\ell^{-1}}\!\!\!\!\! \pmod q }} \log p\Bigg)
               \nonumber \\
    & = & \frac{S_2^*(q,a)}{\varphi(q)}v_2(\beta)\sum_{\ell\in\mathscr{N}_r}\frac{1}{\ell\log\frac{U_2}{\ell}}
           +O\big(U_2\exp(-\log^{1/3}N)\big)
               \nonumber \\
    & = & \frac{c_r(b)V_2(\alpha)}{\log U_2}+O\big(U_2\exp(-\log^{1/3}N)\big).
\end{eqnarray}
This completes the proof of (\ref{g_r=c_rV_2+O}). Also, (\ref{f_3=V_3+O}) and (\ref{f_3^*=W_3+O}) can be proved in similar but simpler processes.
\end{proof}

\begin{lemma}\label{h(a)-upper-m2}
For $\alpha\in\mathfrak{m}_2$, we have
  \begin{equation*}
     h(\alpha)\ll N^{\frac{17}{72}+\frac{1}{2b}-24\varepsilon}.
  \end{equation*}
\end{lemma}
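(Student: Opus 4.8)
The plan is to regard $h(\alpha)$ as a sieve-weighted quadratic Weyl sum and to extract its cancellation from the denominator $q$ of the rational approximation to $\alpha$. Since $\mathfrak{m}_2=\mathfrak{I}_0\setminus(\mathfrak{M}\cup\mathfrak{m}_1)$ and $\mathfrak{M}\cup\mathfrak{m}_1$ covers every $q\leqslant Q_1$, Dirichlet's theorem furnishes, for each $\alpha\in\mathfrak{m}_2$, coprime integers $a,q$ with
\[
Q_1<q\leqslant Q_2,\qquad \Big|\alpha-\frac aq\Big|\leqslant\frac1{qQ_2}.
\]
The governing numerology is the exact balance $Q_1Q_2=N$, which makes the target $N^{17/72+1/(2b)-24\varepsilon}$ equal, up to $N^\varepsilon$, to $U_2Q_1^{-1/2}$. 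This is precisely the size produced by the main term of a quadratic sum of length $\asymp U_2$ and modulus $\asymp q>Q_1$, so the whole task is to show that the error terms do not exceed it.

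Writing $d=mn$ and $c(d)=\sum_{d=mn}a(m)b(n)\ll\tau(d)$, one has $h(\alpha)=\sum_{d\leqslant D}c(d)f_2(\alpha,d)$, where $f_2(\alpha,d)=\sum_{U_2/d<\ell\leqslant2U_2/d}e(\alpha d^2\ell^2)$ is a pure quadratic sum of length $L_d=U_2/d$ and leading coefficient $\alpha d^2=ad^2/q+\beta d^2$. Weyl differencing, with the approximation to $2\alpha d^2$ inherited from $\alpha=a/q+\beta$ (whose $\beta$-part is harmless, the total phase being $|\beta|U_2^2\leqslant U_2^2/(Q_1Q_2)\asymp1$), gives
\[
f_2(\alpha,d)\ll N^\varepsilon\Big(L_dq_d^{-1/2}+L_d^{1/2}+q_d^{1/2}\Big),\qquad q_d=\frac{q}{(q,2d^2)}.
\]
The first term, summed against $c(d)$ via the elementary estimate $\sum_{d\leqslant D}\tau(d)d^{-1}(q,2d^2)^{1/2}\ll N^\varepsilon$, contributes $\ll U_2q^{-1/2}N^\varepsilon\ll N^{17/72+1/(2b)-24\varepsilon}$ by $q>Q_1$, and the $L_d^{1/2}$ term is smaller; thus the main term is already of the right order. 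The difficulty is concentrated in the complete-sum term $q_d^{1/2}$: since $q$ may be as large as $Q_2\asymp N^{17/36+1/b}$, exceeding $L_d$ for the larger $d$, bounding $\sum_d\tau(d)q_d^{1/2}$ by the triangle inequality overshoots by a power of $D$. I would isolate the hard range by splitting according to whether $q\leqslant U_2/D$, in which case every $f_2(\alpha,d)$ is longer than its modulus and the per-$d$ bound already suffices (here $\sum_d\tau(d)q_d^{1/2}\ll Dq^{1/2}\leqslant U_2q^{-1/2}$), or $q>U_2/D$, where the short sums $f_2(\alpha,d)$ with $d>U_2/q$ must be treated through the factorisation $d=mn$.

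For the latter range the plan is to return to the bilinear form and estimate $|h(\alpha)|^2$ by opening the square over the four sieve variables $m_1,n_1,m_2,n_2$ (using $|a|,|b|\ll1$) and differencing the inner double sum in the long variable $t=m_1n_1\ell_1-m_2n_2\ell_2$. This reduces matters to a min-sum of the shape
\[
\sum_{m_1,m_2,n_1,n_2}\ \sum_{t}\min\Big(\frac{U_2}{\Lambda},\ \big\|2\alpha t\Lambda\big\|^{-1}\Big),\qquad \Lambda=\mathrm{lcm}(m_1n_1,m_2n_2),
\]
where $\|\cdot\|$ denotes the distance to the nearest integer; the diagonal $t=0$ contributes $\ll N^{1/2+\varepsilon}$, while in the off-diagonal the relation $\mathrm{lcm}(u,v)\cdot(u,v)=uv$ keeps the summation over the four variables convergent up to logarithms, so that its main term reduces to $U_2^2q^{-1}$ times a logarithmic factor, which is $\ll Q_2N^\varepsilon$ by $q>Q_1$ and $Q_1Q_2=N$.

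The main obstacle is exactly this Type-II / complete-sum term: the modulus $q$ can exceed the length of the individual quadratic sums, so per-$d$ Weyl bounds are lossy and genuine bilinear cancellation from the product structure $d=mn$ is indispensable. The delicate points are the control of the terms of the min-sum beyond the main one — in particular those counting complete periods, whose naive size is governed by $\sum U_2\,(m_1n_1,m_2n_2)^{-1}$ and must be shown not to overshoot — together with the careful interplay of the ranges of $q$, $d$ and $\beta$. It is here, rather than in the main term (which is already of the correct order $U_2Q_1^{-1/2}$), that the weight of the argument lies; it is the smallness of $D=N^{3/(4b)-1/48-51\varepsilon}$, together with the generous powers of $\varepsilon$ carried by $D$, $Q_1$ and $Q_2$, that ultimately makes the bookkeeping close.
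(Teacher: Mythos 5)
Your framing of the easy parts is sound and matches what the paper needs: on $\mathfrak{m}_2$ Dirichlet's theorem does give $(a,q)=1$ with $Q_1<q\leqslant Q_2$ and $|\alpha-a/q|\leqslant 1/(qQ_2)$, the term $U_2q^{-1/2}N^{\varepsilon}\ll N^{\frac{17}{72}+\frac{1}{2b}-24\varepsilon}$ is of the right size, and the diagonal in $|h(\alpha)|^2$ is $\ll N^{1/2+\varepsilon}$, admissible because $\frac{17}{36}+\frac1b>\frac12$ for $b\leqslant35$. The gap sits exactly at the point you yourself flag as delicate, and it is fatal to the route you choose. Opening $|h(\alpha)|^2$ over all four variables $m_1,n_1,m_2,n_2$ discards the bilinear structure: your min--sum depends only on the products $u=m_1n_1$, $v=m_2n_2$, so from that point on you are estimating a general sum $\sum_{d\leqslant D}c(d)f_2(\alpha,d)$ with $c(d)\ll\tau(d)$. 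For a fixed pair $(u,v)$, the standard completion bound for $\sum_{t}\min\big(U_2/\Lambda,\ \|2\alpha uv\,t\|^{-1}\big)$, $\Lambda=\mathrm{lcm}(u,v)$, produces, besides the term $U_2^2(q,2uv)/(uvq)$ that you correctly sum to $\ll U_2^{2}q^{-1}N^{\varepsilon}\ll Q_2N^{\varepsilon}$, the block terms $\big(U_2/(u,v)\big)\log N$ and $q'\log q'$ with $q'=q/(q,2uv)$. The first of these --- your ``complete periods'' term --- summed over $u,v\leqslant D$, most pairs of which are coprime, is $\gg U_2D^2=N^{\frac{11}{24}+\frac{3}{2b}-102\varepsilon}$, while the target for $|h|^2$ is $N^{\frac{17}{36}+\frac1b-48\varepsilon}$; the ratio is $\gg N^{\frac{1}{2b}-\frac{1}{72}-54\varepsilon}$, a genuinely positive power of $N$ for every $12\leqslant b\leqslant35$, since $\frac{1}{2b}>\frac{1}{72}$ precisely when $b<36$ and $\varepsilon$ is arbitrarily small. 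This is not slack in the bookkeeping: when the $t$--range exceeds $q'$ the sum $\sum_{t}\|\gamma t\|^{-1}$ really is of order $T\log q'$, so no refinement of the min--sum lemma removes it; and the term $q'\log q'$ is worse still, because for $b\leqslant35$ one has $Q_2>U_2$, so the modulus can exceed the whole length of the $t$--sum. In short, symmetric squaring yields at best $h(\alpha)\ll N^{\frac14+\varepsilon}D$, whereas the lemma requires $N^{\frac14+\varepsilon}D^{\frac23}$; the deficit $D^{1/3}=N^{\frac{1}{4b}-\frac{1}{144}-17\varepsilon}$ is a positive power of $N$ throughout $12\leqslant b\leqslant35$.

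What is missing is precisely the asymmetric use of the factorization $d=mn$ with the unequal ranges $m\leqslant D^{2/3}$, $n\leqslant D^{1/3}$, which your argument never invokes --- and which is the entire reason $h(\alpha)$ is defined with that lopsided splitting. The paper proves nothing here from scratch: it quotes estimate (4.5) of Lemma 4.2 of Br\"{u}dern and Kawada \cite{Brudern-Kawada}, namely
\begin{equation*}
h(\alpha)\ \ll\ \frac{N^{\frac12}\tau^2(q)\log^2N}{(q+N|q\alpha-a|)^{\frac12}}\ +\ N^{\frac14+\varepsilon}D^{\frac23},
\end{equation*}
and then runs exactly your numerology: on $\mathfrak{m}_2$ the first term is $\ll N^{\frac12+\varepsilon}Q_1^{-\frac12}=N^{\frac{17}{72}+\frac{1}{2b}-24\varepsilon}$ and the second is $N^{\frac{17}{72}+\frac{1}{2b}-33\varepsilon}$. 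The error term $D^{2/3}$ in that lemma is of the shape $\max(M,{N'}^{2})$ for coefficient ranges $m\leqslant M$, $n\leqslant N'$: it reflects bilinear cancellation extracted asymmetrically (squaring in the short variable $n$ only), and the choice $M=D^{2/3}$, $N'=D^{1/3}$ in the definition of $h$ balances $M={N'}^2$. To repair your proof you must either cite that lemma, as the paper does, or reproduce its asymmetric bilinear argument; the symmetric Cauchy--Schwarz step you propose cannot close the gap.
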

\begin{proof}
  By the estimate (4.5) of Lemma 4.2 in Br\"{u}dern and Kawada \cite{Brudern-Kawada}, we deduce that
\begin{align*}
   h(\alpha) \ll & \quad \frac{N^{\frac{1}{2}}\tau^2(q)\log^2N}{(q+N|q\alpha-a|)^{1/2}}+N^{\frac{1}{4}+\varepsilon}D^{\frac{2}{3}}  \\
   \ll & \quad N^{\frac{1}{2}+\varepsilon}Q_1^{-\frac{1}{2}}+N^{\frac{1}{4}+\varepsilon}D^{\frac{2}{3}}\ll N^{\frac{17}{72}+\frac{1}{2b}-24\varepsilon}.
\end{align*}
This completes the proof of Lemma \ref{h(a)-upper-m2}.
\end{proof}

\section{Mean Value Theorems}

In this section, we shall prove the mean value theorems for the proof of  Theorem \ref{Theorem}.

\begin{proposition}\label{mean-value-theorem-1}
For $12\leqslant b\leqslant35$, define
\begin{equation*}
  J(N,d)=\sum_{\substack{m^2+p_1^3+p_2^3+p_3^3+p_4^3+p_5^4+p_6^b=N\\ m\in\mathcal{L},\quad m\equiv 0 \!\!\!\pmod d \\
  U_3<p_1,\,p_2\leqslant 2U_3,\, U_3^*<p_3,p_4\leqslant2U_3^* \\ U_4<p_5\leqslant 2U_4,\, U_b<p_6\leqslant 2U_b}}
  \prod_{j=1}^6\log p_j.
\end{equation*}
Then we have
\begin{equation*}
  \sum_{m\leqslant D^{2/3}}a(m)\sum_{n\leqslant D^{1/3}}b(n)\bigg(J(N,mn)-\frac{\mathfrak{S}_{mn}(N)}{mn}\mathcal{J}(N)\bigg)
  \ll N^{\frac{35}{36}+\frac{1}{b}}\log^{-A}N.
\end{equation*}
\end{proposition}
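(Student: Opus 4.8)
The plan is to recognize $J(N,d)$ as a count of solutions to the Waring--Goldbach-type equation, weighted by logarithms, and to extract it via the Hardy--Littlewood circle method using the Farey dissection (\ref{Farey-dissection}). Writing $f_2(\alpha,d)=\sum_{U_2<d\ell\le 2U_2}e(\alpha(d\ell)^2)$ for the square over multiples of $d$, I would express
\begin{equation*}
  J(N,d)=\int_{\mathfrak{I}_0}f_2(\alpha,d)f_3^2(\alpha)f_3^{*2}(\alpha)f_4(\alpha)f_b(\alpha)e(-\alpha N)\,\mathrm{d}\alpha.
\end{equation*}
After summing against the coefficients $a(m)b(n)$, the square-generating function becomes $h(\alpha)=\sum_{m\le D^{2/3}}a(m)\sum_{n\le D^{1/3}}b(n)f_2(\alpha,mn)$. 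So the object to analyze is $\int_{\mathfrak{I}_0}h(\alpha)f_3^2(\alpha)f_3^{*2}(\alpha)f_4(\alpha)f_b(\alpha)e(-\alpha N)\,\mathrm{d}\alpha$, and the claimed main term $\sum_{m,n}a(m)b(n)\frac{\mathfrak{S}_{mn}(N)}{mn}\mathcal{J}(N)$ is precisely what the major arc $\mathfrak{M}_0$ should produce.

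The core of the argument is thus to show that the three complementary pieces $\mathfrak{m}_0$, $\mathfrak{m}_1$, $\mathfrak{m}_2$ each contribute $o(N^{35/36+1/b}\log^{-A}N)$, while $\mathfrak{M}_0$ yields the singular series and singular integral with an acceptable error. On $\mathfrak{M}_0$ I would substitute the approximations from Lemma \ref{f_k-tihuan}, namely $f_k(\alpha)=V_k(\alpha)+O(U_k\exp(-\log^{1/3}N))$ together with the corresponding statements for $f_3^*$ and for the sieve-weighted square (whose local behavior is governed by $W(\alpha)$ in Lemma \ref{W(a)-Delta3}); replacing each factor in turn and controlling the cross terms via the mean-square bounds (\ref{W(a)-mean-square}) and (\ref{V-6-mean-square}) reduces the main-arc integral to the product of $A_{mn}(q,N)$ over $q$ and the integral $\mathcal{J}(N)$, giving the stated main term with the right error. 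For the minor arcs $\mathfrak{m}_1$ and $\mathfrak{m}_0$, the strategy is a pointwise $L^\infty$ bound on one factor (using Weyl/Hua-type estimates via Lemma \ref{Hua-fuhe} and the Farey structure) combined with $L^2$ mean-value control of the remaining factors through Lemma \ref{Brudern-lemma-1987} and Lemma \ref{f_3-mean-zh}; on $\mathfrak{m}_2$ the decisive input is the pointwise bound $h(\alpha)\ll N^{17/72+1/(2b)-24\varepsilon}$ from Lemma \ref{h(a)-upper-m2}, which I would pair with Cauchy--Schwarz applied to the cubic and higher-power factors.

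Concretely, on $\mathfrak{m}_2$ I would bound $\int |h f_3^2 f_3^{*2} f_4 f_b|\,\mathrm{d}\alpha$ by taking $h$ in sup-norm and $f_b$ (or $f_4$) in a trivial pointwise estimate, then applying Cauchy--Schwarz to $f_3^2 f_3^{*2}$ to invoke $\int_0^1|f_3 f_3^{*2}|^2\,\mathrm{d}\alpha\ll N^{8/9+\varepsilon}$ from Lemma \ref{f_3-mean-zh}(iii); the exponent bookkeeping must close to beat $N^{35/36+1/b}$ with the saving coming from the $-24\varepsilon$ in Lemma \ref{h(a)-upper-m2}. On $\mathfrak{m}_1$ I would use the larger-modulus Weyl savings on one higher-power generating function together with the same fourth-moment bound $\int_0^1|f_3 f_3^*|^4\,\mathrm{d}\alpha\ll N^{13/9}\log^8 N$ from Lemma \ref{f_3-mean-zh}(iv). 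The main obstacle I anticipate is the $\mathfrak{M}_0$ analysis, not the minor arcs: the successive replacement of six analytic generating functions by their major-arc models produces many cross terms, and ensuring each is absorbed into the error $N^{35/36+1/b}\log^{-A}N$ requires the coupled mean-square estimates (\ref{W(a)-Delta-mean-square}), (\ref{W(a)-mean-square}) and (\ref{V-6-mean-square}) to interlock exactly with the chosen values of $Q_0$, $Q_1$, $Q_2$ and $D$. The delicate point is that the sieve weight on the square forces $h(\alpha)$ rather than a clean $f_2$, so the local density $\mathfrak{S}_{mn}(N)/mn$ emerges only after carefully separating the $d$-dependence in $S_2(q,ad^2)$ and summing over $m,n$; verifying that this summation reproduces $\sum_{m,n}a(m)b(n)\mathfrak{S}_{mn}(N)/(mn)$ with a genuinely smaller remainder is where the argument is tightest.
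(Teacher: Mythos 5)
Your overall architecture (the identity $\sum_{m,n}a(m)b(n)J(N,mn)=\int_{\mathfrak{I}_0}h(\alpha)f_3^2(\alpha)f_3^{*2}(\alpha)f_4(\alpha)f_b(\alpha)e(-N\alpha)\,\mathrm{d}\alpha$, the four-piece Farey dissection, and the major-arc extraction of $\mathfrak{S}_{mn}(N)\mathcal{J}(N)/(mn)$) matches the paper. But your concrete treatment of $\mathfrak{m}_2$ fails on exponent bookkeeping, and this is a genuine gap, not a detail. If you take $h$ in sup-norm, bound $f_b$ trivially by $N^{1/b}$, and then apply Cauchy--Schwarz to the remaining cubes and $f_4$ via
\begin{equation*}
\int_0^1|f_3^2f_3^{*2}f_4|\,\mathrm{d}\alpha\leqslant\bigg(\int_0^1|f_3f_3^{*2}|^2\,\mathrm{d}\alpha\bigg)^{1/2}\bigg(\int_0^1|f_3f_4|^2\,\mathrm{d}\alpha\bigg)^{1/2}\ll N^{\frac{4}{9}+\frac{7}{24}+\varepsilon}=N^{\frac{53}{72}+\varepsilon},
\end{equation*}
you get in total $N^{\frac{17}{72}+\frac{1}{2b}-24\varepsilon}\cdot N^{\frac{1}{b}}\cdot N^{\frac{53}{72}+\varepsilon}=N^{\frac{35}{36}+\frac{1}{b}+\frac{1}{2b}-23\varepsilon}$, which overshoots the target $N^{\frac{35}{36}+\frac{1}{b}}$ by $N^{\frac{1}{2b}-23\varepsilon}$; since $1/(2b)\geqslant 1/70$ and $\varepsilon$ is arbitrarily small, the $-24\varepsilon$ saving cannot absorb this. (Bounding $f_4$ trivially instead is worse: the excess becomes $N^{1/8}$.) The paper avoids this by never taking $f_4$ or $f_b$ out of the integral: it splits $f_3^2f_3^{*2}f_4f_b=(f_3f_4f_b)(f_3f_3^{*2})$ by Cauchy--Schwarz and applies Br\"{u}dern's Lemma \ref{Brudern-lemma-1987} to get $\int_0^1|f_3f_4f_b|^2\,\mathrm{d}\alpha\ll N^{\frac{7}{12}+\frac{1}{b}+\varepsilon}$, which saves a factor $N^{1/b}$ over $\sup|f_b|^2\int|f_3f_4|^2$, i.e.\ exactly the missing $N^{1/(2b)}$ after taking the square root. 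Note also that the admissibility condition of Br\"{u}dern's lemma, $\tfrac14+\tfrac1b\leqslant\tfrac13$, is precisely where the hypothesis $b\geqslant12$ enters.

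A second, structural misplacement: your plan assigns the coupled mean-square estimates (\ref{W(a)-Delta-mean-square}), (\ref{W(a)-mean-square}), (\ref{V-6-mean-square}) to the major arc $\mathfrak{M}_0$ and proposes ``Weyl savings on one higher-power generating function'' for $\mathfrak{m}_1$. Neither matches what can actually be made to work. On $\mathfrak{m}_1$ the moduli satisfy $q>Q_0^5=\log^{100A}N$ only, so pointwise Weyl/Vinogradov-type bounds for the prime sums $f_4$ or $f_b$ yield at best logarithmic savings, and Lemma \ref{Hua-fuhe} concerns complete sums $S_j(q,a)$, not minor-arc prime exponential sums; no such estimate is available in the paper's toolkit. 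The paper's actual mechanism on $\mathfrak{m}_1$ is: replace $h$ by $W$ via Vaughan's Theorem 4.1, exploit the decay $W(\alpha)\ll\tau_3(q)U_2\log^2N/\big(q^{1/2}(1+|\beta|N)\big)$ (which is small either because $q>Q_0^5$ or because $|\beta|$ is large, whence the further split $\mathfrak{N}_0\cup\mathfrak{N}_1$), then decompose $f_4(\alpha)=\Delta_4(\alpha)+V_4(\alpha)+O(1)$ and invoke the Li--Cai mean-square bounds for $W\Delta_4$, $V_4$ and $W$ paired with $\sup|f_b|$ and the fourth moment $\int_0^1|f_3f_3^*|^4\,\mathrm{d}\alpha\ll N^{13/9}\log^8N$. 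On $\mathfrak{M}_0$ itself no such machinery is needed: there one substitutes the Siegel--Walfisz approximations of Lemma \ref{f_k-tihuan} (with errors of size $\exp(-\log^{1/3}N)$) and the standard major-arc computation produces the singular series and $\mathcal{J}(N)\asymp N^{\frac{35}{36}+\frac{1}{b}}$. So the ingredients you list are the right ones, but wired to the wrong arcs; as written, your $\mathfrak{m}_1$ and $\mathfrak{m}_2$ estimates do not close.
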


\begin{proof}
   Let
\begin{equation*}
  K(\alpha)=h(\alpha)f_3^2(\alpha)f_3^{*2}(\alpha)f_4(\alpha)f_b(\alpha)e(-N\alpha).
\end{equation*}
By the Farey dissection (\ref{Farey-dissection}), we have
\begin{align}\label{junzhi-fenjie}
  & \quad \sum_{m\leqslant D^{2/3}}a(m)\sum_{n\leqslant D^{1/3}}b(n)J(N,mn)
              \nonumber \\
  = & \quad \int_{\mathfrak{I}_0}K(\alpha)\mathrm{d}\alpha=
  \bigg(\int_{\mathfrak{M}_0}+\int_{\mathfrak{m}_0}+\int_{\mathfrak{m}_1}+\int_{\mathfrak{m}_2}\bigg)K(\alpha)\mathrm{d}\alpha.
\end{align}
From Cauchy's inequality, Lemma \ref{Brudern-lemma-1987} and (iii) of Lemma \ref{f_3-mean-zh}, we obtain
\begin{align*}
         & \,\, \int_0^1|f_3^2(\alpha)f_3^{*2}(\alpha)f_4(\alpha)f_b(\alpha)|\mathrm{d}\alpha     \nonumber \\
\end{align*}
\begin{align}\label{f_3^2-f_3^*2-f_6-f_7}
     \ll & \,\,  \bigg(\int_0^1|f_3(\alpha)f_4(\alpha)f_b(\alpha)|^2\mathrm{d}\alpha\bigg)^{\frac{1}{2}}
             \bigg(\int_0^1\big|f_3(\alpha)f_3^{*2}(\alpha)\big|^2\mathrm{d}\alpha\bigg)^{\frac{1}{2}}    \nonumber \\
     \ll & \,\,  (N^{\frac{7}{12}+\frac{1}{b}+\varepsilon})^{1/2}(N^{\frac{8}{9}+\varepsilon})^{1/2} \ll N^{\frac{53}{72}+\frac{1}{2b}+\varepsilon}.
\end{align}
By Lemma \ref{h(a)-upper-m2} and (\ref{f_3^2-f_3^*2-f_6-f_7}), we get
\begin{align}\label{K(a)-upper-m2}
              \int_{\mathfrak{m}_2}K(\alpha)\mathrm{d}\alpha
  \ll & \,\, \sup_{\alpha\in\mathfrak{m}_2}|h(\alpha)|\int_0^1|f_3^2(\alpha)f_3^{*2}(\alpha)f_4(\alpha)f_b(\alpha)|\mathrm{d}\alpha   \nonumber \\
  \ll & \,\, N^{\frac{17}{72}+\frac{1}{2b}-24\varepsilon}\cdot N^{\frac{53}{72}+\frac{1}{2b}+\varepsilon}\ll N^{\frac{35}{36}+\frac{1}{b}-\varepsilon}.
\end{align}
 From Theorem 4.1 of Vaughan \cite{Vaughan-book}, for $\alpha\in\mathfrak{m}_1$, we have
\begin{equation} \label{h(a)=W(a)+error}
    h(\alpha)=W(\alpha)+O(DQ_1^{\frac{1}{2}+\varepsilon})=W(\alpha)+O(N^{\frac{35}{144}+\frac{1}{4b}-25\varepsilon}),
\end{equation}
 where $W(\alpha)$ is defined by (\ref{W(alpha)-def}). Define
\begin{equation}\label{K_1-def}
  K_1(\alpha)=W(\alpha)f_3^2(\alpha)f_3^{*2}(\alpha)f_4(\alpha)f_b(\alpha)e(-N\alpha).
\end{equation}
Then, by (\ref{f_3^2-f_3^*2-f_6-f_7}) and (\ref{h(a)=W(a)+error}), we have
\begin{equation}\label{K(a)=K_1(a)+error}
   \int_{\mathfrak{m}_1}K(\alpha)\mathrm{d}\alpha=\int_{\mathfrak{m}_1}K_1(\alpha)\mathrm{d}\alpha+O(N^{\frac{47}{48}+\frac{3}{4b}-20\varepsilon}).
\end{equation}
Let
\begin{equation*}
   \mathfrak{N}_0(q,a)=\bigg(\frac{a}{q}-\frac{1}{N^{709/945}},\frac{a}{q}+\frac{1}{N^{709/945}}\bigg],\qquad
   \mathfrak{N}_0=\bigcup_{1\leqslant q\leqslant Q_0}\bigcup_{\substack{a=-q\\(a,q)=1}}^{2q}\mathfrak{N}_0(q,a),
\end{equation*}
\begin{equation*}
   \mathfrak{N}_1(q,a)=\mathfrak{N}(q,a)\setminus\mathfrak{N}_0(q,a),\qquad\quad
    \mathfrak{N}_1=\bigcup_{1\leqslant q\leqslant Q_0}\bigcup_{\substack{a=-q\\(a,q)=1}}^{2q}\mathfrak{N}_1(q,a),
\end{equation*}
\begin{equation*}
    \mathfrak{N}=\bigcup_{1\leqslant q\leqslant Q_0}\bigcup_{\substack{a=-q\\(a,q)=1}}^{2q}\mathfrak{N}(q,a),
\end{equation*}
where $\mathfrak{N}(q,a)$ is defined by (\ref{N(q,a)-def}). Then we have $\mathfrak{m}_1\subset\mathfrak{I}_0\subset\mathfrak{N}$.
By the rational approximation theorem of Dirichlet, we get
\begin{align}\label{K_1-upper=1+2}
            \int_{\mathfrak{m}_1}K_1(\alpha)\mathrm{d}\alpha
  \ll &  \int_{\mathfrak{m}_1\cap\mathfrak{N}_0}|K_1(\alpha)|\mathrm{d}\alpha+\int_{\mathfrak{m}_1\cap\mathfrak{N}_1}|K_1(\alpha)|\mathrm{d}\alpha
             \nonumber   \\
  \ll &   \sum_{1\leqslant q\leqslant Q_0}\sum_{\substack{a=-q\\(a,q)=1}}^{2q}\int_{\mathfrak{m}_1\cap\mathfrak{N}_0(q,a)}|K_1(\alpha)|\mathrm{d}\alpha
            \nonumber \\
    &  +\sum_{1\leqslant q\leqslant Q_0}\sum_{\substack{a=-q\\(a,q)=1}}^{2q}\int_{\mathfrak{m}_1\cap\mathfrak{N}_1(q,a)}|K_1(\alpha)|\mathrm{d}\alpha .
\end{align}
By Lemma \ref{Titchmarsh-lemma-42}, we have
\begin{equation*}
    v_k(\beta)\ll \frac{U_k}{1+|\beta|N}.
\end{equation*}
From the trivial inequality $(q,d^2)\leqslant(q,d)^2$ and above estimate, we have
\begin{eqnarray} \label{W(a)-upper}
   |W(\alpha)| & \ll & \sum_{d\leqslant D}\frac{\tau(d)}{d}(q,d^2)^{1/2}q^{-1/2}|v_2(\beta)|
                         \nonumber \\
   & \ll & \tau_3(q)q^{-1/2}|v_2(\beta)|\log^2N \ll \frac{\tau_3(q)U_2\log^2N}{q^{1/2}(1+|\beta|N)}.
\end{eqnarray}
 Thus, for $\alpha\in\mathfrak{N}_1(q,a)$, we get
\begin{equation}
 W(\alpha)\ll N^{\frac{473}{1890}}\log^2N,
\end{equation}
from which and (\ref{f_3^2-f_3^*2-f_6-f_7}) we have
\begin{eqnarray}\label{K_1-upper-2}
    &  & \sum_{1\leqslant q\leqslant Q_0}\sum_{\substack{a=-q\\(a,q)=1}}^{2q}\int_{\mathfrak{m}_1\cap\mathfrak{N}_1(q,a)}|K_1(\alpha)|\mathrm{d}\alpha
             \nonumber \\
    & \ll & N^{\frac{473}{1890}}\log^2N\cdot\int_0^1\big|f_3^2(\alpha)f_3^{*2}(\alpha)f_6(\alpha)f_7(\alpha)\big|\mathrm{d}\alpha\ll N^{\frac{35}{36}+\frac{1}{b}-\varepsilon}.
\end{eqnarray}
 By Lemma \ref{Titchmarsh-lemma-48}, we derive
\begin{equation*}
   f_4(\alpha)=\Delta_4(\alpha)+V_4(\alpha)+O(1).
\end{equation*}
Therefore, we have
\begin{eqnarray}\label{m_1-cap-N0-fenjie}
    &  & \sum_{1\leqslant q\leqslant Q_0}\sum_{\substack{a=-q\\(a,q)=1}}^{2q}\int_{\mathfrak{m}_1\cap\mathfrak{N}_0(q,a)}|K_1(\alpha)|\mathrm{d}\alpha
             \nonumber \\
    & \ll & \sum_{1\leqslant q\leqslant Q_0}\sum_{\substack{a=-q\\(a,q)=1}}^{2q}\int_{\mathfrak{m}_1\cap\mathfrak{N}_0(q,a)}
            \big|W(\alpha)\Delta_4(\alpha)f_3^2(\alpha)f_3^{*2}(\alpha)f_b(\alpha)\big|\mathrm{d}\alpha
                \nonumber \\
    &     & +\sum_{1\leqslant q\leqslant Q_0}\sum_{\substack{a=-q\\(a,q)=1}}^{2q}\int_{\mathfrak{m}_1\cap\mathfrak{N}_0(q,a)}
            \big|W(\alpha)V_4(\alpha)f_3^2(\alpha)f_3^{*2}(\alpha)f_b(\alpha)\big|\mathrm{d}\alpha
                 \nonumber \\
    & & + \sum_{1\leqslant q\leqslant Q_0}\sum_{\substack{a=-q\\(a,q)=1}}^{2q}\int_{\mathfrak{m}_1\cap\mathfrak{N}_0(q,a)}
                  \big|W(\alpha)f_3^2(\alpha)f_3^{*2}(\alpha)f_b(\alpha)\big|\mathrm{d}\alpha
                       \nonumber \\
    & =: & \mathcal{I}_1+\mathcal{I}_2+\mathcal{I}_3,
\end{eqnarray}
where $\Delta_4(\alpha)$ and $V_4(\alpha)$ are defined by (\ref{Delta_4-def}) and (\ref{V_k-def}), respectively.

It follows from Cauchy's inequality, (iv) of Lemma \ref{f_3-mean-zh} and (\ref{W(a)-Delta-mean-square}) that
\begin{align}\label{I_1-upper}
   \mathcal{I}_1  \ll & \sup_{\alpha\in\mathfrak{N}_0}|f_b(\alpha)|
                        \Bigg(\sum_{1\leqslant q\leqslant Q_0}\sum_{\substack{a=-q\\(a,q)=1}}^{2q}\int_{\mathfrak{N}(q,a)}
                        \big|W(\alpha)\Delta_4(\alpha)\big|^2\mathrm{d}\alpha\Bigg)^{\frac{1}{2}}
                        \bigg(\int_0^1\big|f_3(\alpha)f_3^*(\alpha)\big|^4\mathrm{d}\alpha\bigg)^{\frac{1}{2}}
                            \nonumber   \\
        \ll & \,\, N^{\frac{1}{b}}\big(N^{\frac{1}{2}}\log^{-100A}N\big)^{1/2}\big(N^{\frac{13}{9}}\log^8N\big)^{1/2}
                   \ll N^{\frac{35}{36}+\frac{1}{b}}\log^{-40A}N.
\end{align}
 From (\ref{W(a)-upper}), we know that, for $\alpha\in\mathfrak{m}_1$, there holds
\begin{equation}\label{W(a)-upper-m_1}
  \sup_{\alpha\in\mathfrak{m}_1} |W(\alpha)|\ll N^{\frac{1}{2}}\log^{-30A}N.
\end{equation}
Therefore, by Cauchy's inequality, (\ref{V-6-mean-square}), (\ref{W(a)-upper-m_1}) and (iv) of Lemma \ref{f_3-mean-zh}, we obtain
\begin{align}\label{I_2-upper}
   \mathcal{I}_2 \ll & \bigg(\sup_{\alpha\in\mathfrak{N}_0}|f_b(\alpha)|\bigg)\bigg(  \sup_{\alpha\in\mathfrak{m}_1} |W(\alpha)| \bigg)\cdot
              \Bigg(\sum_{1\leqslant q\leqslant Q_0}\sum_{\substack{a=-q\\(a,q)=1}}^{2q}\int_{\mathfrak{N}(q,a)}
                   \big|V_4(\alpha)\big|^2\mathrm{d}\alpha\Bigg)^{1/2}
                     \nonumber  \\
           & \,\times  \bigg(\int_0^1\big|f_3(\alpha)f_3^*(\alpha)\big|^4\mathrm{d}\alpha\bigg)^{1/2}
                     \nonumber   \\
        \ll & \,  N^{\frac{1}{b}}\cdot N^{\frac{1}{2}}\log^{-30A}N \cdot \big(N^{-\frac{1}{2}}\log^{21A}N\big)^{\frac{1}{2}} \cdot
                   \big(N^{\frac{13}{9}}\log^8N\big)^{\frac{1}{2}}
                      \nonumber   \\
        \ll & \, N^{\frac{35}{36}+\frac{1}{b}}\log^{-10A}N.
\end{align}
From Cauchy's inequality, (\ref{W(a)-mean-square}), and (iv) of Lemma \ref{f_3-mean-zh}, we derive that
\begin{align}\label{I_3-upper}
   \mathcal{I}_3  \ll &  \,\sup_{\alpha\in\mathfrak{N}_0}|f_b(\alpha)|
                         \Bigg(\sum_{1\leqslant q\leqslant Q_0}\sum_{\substack{a=-q\\(a,q)=1}}^{2q}\int_{\mathfrak{N}(q,a)}
                          \big|W(\alpha)\big|^2\mathrm{d}\alpha\Bigg)^{1/2}
                          \bigg(\int_0^1\big|f_3(\alpha)f_3^*(\alpha)\big|^4\mathrm{d}\alpha\bigg)^{1/2}
                      \nonumber   \\
        \ll &   \,  N^{\frac{1}{b}}\cdot(\log^{21A}N)^{\frac{1}{2}}(N^{\frac{13}{9}}\log^{8}N)^{\frac{1}{2}}\ll N^{\frac{13}{18}+\frac{1}{b}}\log^{20A}N
                    \ll N^{\frac{35}{36}+\frac{1}{b}}\log^{-10A}N.
\end{align}
Combining (\ref{m_1-cap-N0-fenjie}), (\ref{I_1-upper}), (\ref{I_2-upper}) and (\ref{I_3-upper}), we can deduce that
\begin{equation}\label{K_1-upper-1}
\sum_{1\leqslant q\leqslant Q_0}\sum_{\substack{a=-q\\(a,q)=1}}^{2q}\int_{\mathfrak{m}_1\cap\mathfrak{N}_0(q,a)}|K_1(\alpha)|\mathrm{d}\alpha
\ll N^{\frac{35}{36}+\frac{1}{b}}\log^{-10A}N.
\end{equation}
From (\ref{K(a)=K_1(a)+error}), (\ref{K_1-upper=1+2}), (\ref{K_1-upper-2}) and (\ref{K_1-upper-1}) we conclude that
\begin{equation}\label{K(a)-upper-m1}
  \int_{\mathfrak{m}_1}K(\alpha)\mathrm{d}\alpha\ll N^{\frac{35}{36}+\frac{1}{b}}\log^{-10A}N.
\end{equation}
Similarly, we obtain
\begin{equation}\label{K(a)-upper-m0}
  \int_{\mathfrak{m}_0}K(\alpha)\mathrm{d}\alpha\ll N^{\frac{35}{36}+\frac{1}{b}}\log^{-10A}N.
\end{equation}

For $\alpha\in\mathfrak{M}_0$, define
\begin{equation*}
  K_0(\alpha)=W(\alpha)V_3^2(\alpha)V_3^{*2}(\alpha)V_4(\alpha)V_b(\alpha)e(-N\alpha).
\end{equation*}
Noticing that (\ref{h(a)=W(a)+error}) still holds for $\alpha\in\mathfrak{M}_0$, it follows
from (\ref{f_3=V_3+O}), (\ref{f_3^*=W_3+O}) and (\ref{h(a)=W(a)+error}) that
\begin{equation*}
   K(\alpha)-K_0(\alpha)\ll N^{\frac{71}{36}+\frac{1}{b}}\exp\big(-\log^{1/4}N\big).
\end{equation*}
By the above estimate, we derive that
\begin{equation}\label{K(a)=K_0(a)+error}
   \int_{\mathfrak{M}_0}K(\alpha)\mathrm{d}\alpha = \int_{\mathfrak{M}_0}K_0(\alpha)\mathrm{d}\alpha +O\big( N^{\frac{35}{36}+\frac{1}{b}}\log^{-A}N\big).
\end{equation}
By the well--known standard technique  in the Hardy--Littlewood method, we deduce that
\begin{equation}\label{K_0=junzhi}
    \int_{\mathfrak{M}_0}K_0(\alpha)\mathrm{d}\alpha =
    \sum_{m\leqslant D^{2/3}}a(m)\sum_{n\leqslant D^{1/3}}b(n)\frac{\mathfrak{S}_{mn}(N)}{mn}\mathcal{J}(N)+O\big( N^{\frac{35}{36}+\frac{1}{b}}\log^{-A}N\big),
\end{equation}
and
\begin{equation}\label{singular-int}
    \mathcal{J}(N)\asymp N^{\frac{35}{36}+\frac{1}{b}}.
\end{equation}
From (\ref{junzhi-fenjie}), (\ref{K(a)-upper-m2}), (\ref{K(a)-upper-m1})--(\ref{singular-int}) , the
result of Proposition \ref{mean-value-theorem-1} follows.
\end{proof}

In a similar way, we have

\begin{proposition}\label{mean-value-theorem-2}
For $12\leqslant b\leqslant35$, define
\begin{equation*}
  J_r(N,d)=\sum_{\substack{(\ell p)^2+m^3+p_2^3+p_3^3+p_4^3+p_5^4+p_6^b=N \\ \ell p\in\mathcal{L}, \,\,\, \ell\in\mathscr{N}_r,  \,\,\,
   m\equiv0 (\!\bmod d) \\
    U_3<p_2\leqslant2U_3,\, U_3^*<p_3,\,p_4\leqslant 2U_3^* \\ U_4<p_5\leqslant 2U_4,\, U_b<p_6\leqslant2U_b}}
  \left(\frac{\log p}{\log \frac{U_2}{\ell}}\prod_{j=2}^6\log p_j\right).
\end{equation*}
Then we have
\begin{equation*}
  \sum_{m\leqslant D^{2/3}}a(m)\sum_{n\leqslant D^{1/3}}b(n)\bigg(J_r(N,mn)-\frac{c_r(b)\mathfrak{S}_{mn}(N)}{mn\log U_2}\mathcal{J}(N)\bigg)
    \ll N^{\frac{35}{36}+\frac{1}{b}}\log^{-A}N,
\end{equation*}
where $c_r(b)$ is defined by (\ref{c_r-def}).
\end{proposition}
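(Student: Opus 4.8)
The plan is to run the circle--method argument of Proposition \ref{mean-value-theorem-1} in close parallel, exploiting that $g_r(\alpha)$ is, exactly like $h(\alpha)$, a bilinear exponential sum over squares of products $(\ell p)^2$, while the congruence $m\equiv0\pmod{d}$ together with the coefficients $a(m),b(n)$ now attaches to a cubic variable and therefore produces a sum of the same analytic type as $f_3(\alpha)$. Writing
\begin{equation*}
  h^\sharp(\alpha)=\sum_{m\leqslant D^{2/3}}a(m)\sum_{n\leqslant D^{1/3}}b(n)\sum_{\substack{U_3<k\leqslant2U_3\\ k\equiv0\!\!\!\pmod{mn}}}e(\alpha k^3)
\end{equation*}
for the cubic counterpart of $h(\alpha)$, I would set $\widetilde{K}_r(\alpha)=g_r(\alpha)h^\sharp(\alpha)f_3(\alpha)f_3^{*2}(\alpha)f_4(\alpha)f_b(\alpha)e(-N\alpha)$, so that by orthogonality $\int_{\mathfrak{I}_0}\widetilde{K}_r(\alpha)\,\mathrm{d}\alpha=\sum_{m\leqslant D^{2/3}}a(m)\sum_{n\leqslant D^{1/3}}b(n)J_r(N,mn)$, and then cut $\mathfrak{I}_0$ into $\mathfrak{M}_0\cup\mathfrak{m}_0\cup\mathfrak{m}_1\cup\mathfrak{m}_2$ as in (\ref{Farey-dissection}).

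On $\mathfrak{m}_2$ I would pull the pointwise power saving off $g_r(\alpha)$: being a bilinear square sum it should obey the same Br\"{u}dern--Kawada estimate as in Lemma \ref{h(a)-upper-m2}, namely $g_r(\alpha)\ll N^{17/72+1/(2b)-24\varepsilon}$, after which the product $h^\sharp f_3 f_3^{*2}f_4 f_b$ is bounded in mean by Cauchy's inequality with Lemmas \ref{Brudern-lemma-1987} and \ref{f_3-mean-zh}, the cubic sum $h^\sharp(\alpha)$ taking the slot held by the free cube $f_3(\alpha)$ in (\ref{f_3^2-f_3^*2-f_6-f_7}) by virtue of its comparable mean square; the exponents then combine to $N^{35/36+1/b-\varepsilon}$ just as in (\ref{K(a)-upper-m2}). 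On $\mathfrak{m}_0$ and $\mathfrak{m}_1$ I would follow the template of (\ref{K(a)-upper-m1})--(\ref{K(a)-upper-m0}): on the sub--arcs $\mathfrak{N}_0$ approximate $g_r(\alpha)$ by the Siegel--Walfisz main--term model underlying Lemma \ref{f_k-tihuan} and bound it directly on $\mathfrak{N}_1$, decompose $f_4=\Delta_4+V_4+O(1)$ via Lemma \ref{Titchmarsh-lemma-48}, and dispose of the three resulting integrals $\mathcal{I}_1,\mathcal{I}_2,\mathcal{I}_3$ by the mean--square Lemmas \ref{W(a)-Delta3} and \ref{V_6(alpha)-mean-square} against (iv) of Lemma \ref{f_3-mean-zh}.

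The major arc $\mathfrak{M}_0$ is the only place where the genuinely new input is essential. Here (\ref{f_3=V_3+O}), (\ref{f_3^*=W_3+O}) and, decisively, (\ref{g_r=c_rV_2+O}) of Lemma \ref{f_k-tihuan} let me replace $g_r(\alpha)$ by $\frac{c_r(b)}{\log U_2}V_2(\alpha)$, each $f_k$ by $V_k$, and $h^\sharp$ by its main--term model, turning $\widetilde{K}_r$ into $\frac{c_r(b)}{\log U_2}$ times an integrand whose local densities assemble into the singular series $\mathfrak{S}_{mn}(N)$ and the singular integral $\mathcal{J}(N)$ exactly as in (\ref{K_0=junzhi})--(\ref{singular-int}). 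The constant $\frac{c_r(b)}{\log U_2}$ threads unchanged through the whole evaluation and delivers the stated main term $\sum_{m,n}a(m)b(n)\frac{c_r(b)\mathfrak{S}_{mn}(N)}{mn\log U_2}\mathcal{J}(N)$, so that after subtracting it only the admissible $\log^{-A}N$--contributions from the four arcs survive.

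The step I expect to be the main obstacle is justifying that $g_r(\alpha)$ and $h^\sharp(\alpha)$ really do inherit, \emph{with the same quality}, the three estimates that drive Proposition \ref{mean-value-theorem-1}: the pointwise minor--arc bound of Lemma \ref{h(a)-upper-m2}, the main--term approximation used on $\mathfrak{m}_1\cap\mathfrak{N}_0$, and the mean squares feeding $\mathcal{I}_1,\mathcal{I}_2,\mathcal{I}_3$. For $g_r$ this means rerunning the Br\"{u}dern--Kawada and Siegel--Walfisz machinery for a bilinear square sum whose outer variable $\ell$ ranges over $\mathscr{N}_r$ rather than over divisors up to $D$, and verifying that the weight $1/\log(U_2/\ell)$ and the constraint $\ell\in\mathscr{N}_r$ neither spoil the Weyl--type saving nor enlarge the error on the wider arcs $\mathfrak{N}_0$; once these two points are confirmed, collecting the four contributions yields the claimed bound.
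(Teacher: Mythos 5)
Your setup is correct and matches what the paper's ``in a similar way'' must mean at the level of the main term: the integrand $g_r(\alpha)h^\sharp(\alpha)f_3(\alpha)f_3^{*2}(\alpha)f_4(\alpha)f_b(\alpha)e(-N\alpha)$ does reproduce $\sum_{m,n}a(m)b(n)J_r(N,mn)$, and on $\mathfrak{M}_0$ the replacement of $g_r$ by $c_r(b)V_2(\alpha)/\log U_2$ via (\ref{g_r=c_rV_2+O}) is exactly what produces the factor $c_r(b)/\log U_2$ in the stated main term. But your minor--arc plan reverses the roles of the two non--prime generating functions, and the reversal is fatal. The estimate of Lemma \ref{h(a)-upper-m2} (estimate (4.5) of Br\"udern--Kawada) is a type--I bound: it requires the coefficient--carrying variable to run over moduli $d\leqslant D=N^{3/(4b)-1/48-51\varepsilon}$ with a long unrestricted inner sum, which is precisely the structure of $h$ and of your $h^\sharp$. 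The sum $g_r(\alpha)$ has the opposite shape: its outer variable $\ell\in\mathscr{N}_r$ runs up to about $2U_2/z=2U_2D^{-1/3}$, i.e.\ nearly $N^{1/2}$, and the inner variable is a \emph{prime} confined to the short interval $(U_2/\ell,2U_2/\ell]$. No type--I argument applies to it, and the bound you claim, $g_r(\alpha)\ll N^{17/72+1/(2b)-24\varepsilon}$ uniformly on $\mathfrak{m}_2$ (for $b=35$ this is about $N^{0.265}$, essentially square--root cancellation in a sum of length $\asymp U_2$ over squares of almost--primes), is far beyond what Weyl, Vaughan's identity or Vinogradov's method give for such sums. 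The workable division is the opposite one: keep the pointwise Br\"udern--Kawada saving on $h^\sharp$ (the sieve weights are attached to the cube precisely so that the moduli stay $\leqslant D$), and put $g_r$ \emph{inside} the mean value, where only a counting--type upper bound is needed. Since $g_r$ has bounded coefficients supported on integers of $\mathcal{L}$, an elementary divisor argument gives $\int_0^1|g_r(\alpha)f_4(\alpha)f_b(\alpha)|^2\mathrm{d}\alpha\ll N^{3/4+1/b+\varepsilon}$, so by Cauchy the integral over $\mathfrak{m}_2$ is $\ll\sup_{\alpha\in\mathfrak{m}_2}|h^\sharp(\alpha)|\cdot N^{59/72+1/(2b)+\varepsilon}$; the cubic analogue of Lemma \ref{h(a)-upper-m2} gives $\sup_{\alpha\in\mathfrak{m}_2}|h^\sharp(\alpha)|\ll N^{1/3+\varepsilon}Q_1^{-1/3}+N^{1/6+\varepsilon}D^{2/3}\ll N^{11/72+1/(2b)-c\varepsilon}$, and the exponents close precisely because $b<36$.

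The same reversal undermines your treatment of $\mathfrak{m}_1$. First, (\ref{g_r=c_rV_2+O}) is proved --- and is only true --- on $\mathfrak{M}_0$, whose arcs have width $Q_0/N$; you invoke it on $\mathfrak{N}_0$, whose arcs have width $N^{-709/945}$, where partial summation multiplies the Siegel--Walfisz error by $1+|\beta|N\asymp N^{236/945}$, making the ``approximation'' worse than the trivial bound $U_2$. Second, the mean--square inputs you cite, Lemmas \ref{W(a)-Delta3} and \ref{V_6(alpha)-mean-square}, concern $W(\alpha)$, the type--I model of the \emph{square} sieve sum $h$, which does not occur anywhere in Proposition \ref{mean-value-theorem-2}; what is needed is the cubic type--I approximation $h^\sharp=W^\sharp+O(DQ_1^{1/2+\varepsilon})$ (the analogue of (\ref{h(a)=W(a)+error})) together with mean--square bounds for $W^\sharp\Delta_4$ and $W^\sharp$ (cubic analogues of Lemma \ref{W(a)-Delta3}, to be extracted from Li--Cai as the paper does for squares) --- indeed your plan never specifies how $h^\sharp$ is to be controlled on $\mathfrak{m}_1$ at all. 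So, as written, both minor--arc stages fail. The correct ``similar way'' keeps the sieve--weighted sum $h^\sharp$ in the role that $h$ plays throughout the proof of Proposition \ref{mean-value-theorem-1}, and touches $g_r$ only through mean values (by positivity) on the minor arcs and through its major--arc model (\ref{g_r=c_rV_2+O}) on $\mathfrak{M}_0$.
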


\section{On the function $\omega(d)$}

In this section, we shall investigate the function $\omega(d)$ which is defined in (\ref{omega(d)-def}) and required in the proof of the Theorem \ref{Theorem}.

\begin{lemma}\label{congruence-lemma}
   Let $\mathfrak{K}(q,N)$ and $\mathfrak{L}(q,N)$ denote the number of solutions of the following congruences
\begin{equation*}
   u_1^3+u_2^3+u_3^3+u_4^3+u_5^4+p_6^b\equiv N (\bmod q), \,\,\, 1\leqslant u_j\leqslant q,\,\,\, (u_j,q)=1, \,\,\,1\leqslant j\leqslant6,
\end{equation*}
 and
\begin{equation*}
     x^2+u_1^3+u_2^3+u_3^3+u_4^3+u_5^4+p_6^b\equiv N (\bmod q),\,\,\, 1\leqslant x,u_j\leqslant q, \,\,\, (u_j,q)=1, \,\,\, 1\leqslant j\leqslant6,
\end{equation*}
 respectively. Then we have $\mathfrak{L}(p,N)>\mathfrak{K}(p,N)$. Moreover, there holds
\begin{equation}\label{L(p,N)-asymptotic}
    \mathfrak{L}(p,N)=p^6+O(p^5),
\end{equation}
\begin{equation}\label{K(p,N)-asymptotic}
    \mathfrak{K}(p,N)=p^5+O(p^4).
\end{equation}
\end{lemma}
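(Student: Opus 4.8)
The plan is to detect both congruences by additive characters modulo $p$ and to reduce everything to the complete exponential sums already recorded in Lemma~\ref{Hua-fuhe}. The coprimality conditions $(u_j,p)=1$ turn each cube/fourth/$b$-th power variable into a sum $S_k^*(p,a)$, while the unrestricted variable $x$ in $\mathfrak{L}$ produces the full quadratic Gauss sum $S_2(p,a)$. This gives the representations
\begin{align*}
  \mathfrak{K}(p,N) &= \frac1p\sum_{a=0}^{p-1} S_3^*(p,a)^4\,S_4^*(p,a)\,S_b^*(p,a)\,e\!\Big(\!-\frac{aN}{p}\Big),\\
  \mathfrak{L}(p,N) &= \frac1p\sum_{a=0}^{p-1} S_2(p,a)\,S_3^*(p,a)^4\,S_4^*(p,a)\,S_b^*(p,a)\,e\!\Big(\!-\frac{aN}{p}\Big),
\end{align*}
which is exactly the local structure underlying $B_d(q,N)$ in the Notation.

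For the asymptotic formulas I would isolate the term $a\equiv0\pmod p$. Using $S_k^*(p,0)=p-1$ and $S_2(p,0)=p$, this term equals $(p-1)^6/p=p^5+O(p^4)$ for $\mathfrak{K}$ and $(p-1)^6=p^6+O(p^5)$ for $\mathfrak{L}$, which are the claimed main terms. For $a\not\equiv0$, parts (iii) and (iv) of Lemma~\ref{Hua-fuhe} give $S_k^*(p,a)\ll\sqrt p$ and $S_2(p,a)\ll\sqrt p$, with implied constants depending only on $(3,p-1)\le3$, $(4,p-1)\le4$ and $(b,p-1)\le b\le35$. Hence the product of the six (respectively seven) factors is $\ll p^3$ (respectively $\ll p^{7/2}$); summing over the $p-1$ nonzero residues and dividing by $p$ bounds these contributions by $O(p^3)$ and $O(p^{7/2})$, both absorbed into the stated errors. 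This proves \eqref{K(p,N)-asymptotic} and \eqref{L(p,N)-asymptotic}.

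For the inequality the cleanest route is the elementary recursion obtained by summing over $x$ first, namely
\[
  \mathfrak{L}(p,N)=\sum_{x=1}^{p}\mathfrak{K}(p,N-x^2),
\]
since for each fixed $x$ the inner count of admissible $(u_1,\dots,u_6)$ is exactly $\mathfrak{K}(p,N-x^2)$. The single term $x\equiv0\pmod p$ equals $\mathfrak{K}(p,N)$, and every remaining term is nonnegative, so $\mathfrak{L}(p,N)\ge\mathfrak{K}(p,N)$, with strict inequality as soon as $\mathfrak{K}(p,N-x^2)>0$ for some $x\not\equiv0$. For all sufficiently large $p$ this is automatic, because \eqref{K(p,N)-asymptotic} holds uniformly in the target residue, so $\mathfrak{K}(p,M)=p^5+O(p^4)>0$ for every $M$ once $p$ exceeds an absolute constant determined by the gcd bounds above.

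I expect the genuine obstacle to be not the exponential-sum estimates but the finitely many small primes below that threshold, where the asymptotic is useless and positivity must be read off the multiplicative structure of the power residues: when $p\equiv2\pmod3$ the cubes already exhaust $\mathbb{F}_p^{\times}$, so four of them represent every residue and $\mathfrak{K}(p,M)>0$ for all $M$, while the remaining small primes $p\equiv1\pmod3$ are settled by listing the four-cube value set and adding the contributions of the fourth and $b$-th powers. The genuinely special case is $p=2$, where the only admissible tuple is $u_1=\cdots=u_6=1$ and hence $\mathfrak{K}(2,M)>0$ precisely when $M\equiv0\pmod2$; this is exactly where the standing hypothesis that $N$ is odd enters, for then $N-1$ is even and the term $x=1$ yields $\mathfrak{K}(2,N-1)=1>0=\mathfrak{K}(2,N)$, giving the strict inequality $\mathfrak{L}(2,N)>\mathfrak{K}(2,N)$ and completing the proof.
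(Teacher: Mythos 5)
Your proof is correct, and its analytic core (orthogonality, the complete sums $S_k^*(p,a)$, the Weil-type bounds of Lemma~\ref{Hua-fuhe}, main term from $a\equiv 0$) is the same as the paper's; the genuine difference is how strict positivity is extracted. The paper introduces the all-coprime seven-variable count $\mathfrak{L}^*(p,N)$, writes $p\,\mathfrak{L}^*(p,N)=(p-1)^7+E_p$, and bounds $E_p$ using Weil on $S_2^*,S_3^*,S_4^*$ but only the trivial bound $|S_b^*|\leqslant p-1$; against the main term $(p-1)^7$ this already forces $\mathfrak{L}^*(p,N)>0$ for $p\geqslant 13$, leaving only $p=2,3,5,7,11$ to check by hand, and then $\mathfrak{L}=\mathfrak{L}^*+\mathfrak{K}>\mathfrak{K}$. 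Your identity $\mathfrak{L}(p,N)=\sum_{x}\mathfrak{K}(p,N-x^2)$ is the same splitting (the paper's $\mathfrak{L}^*$ is exactly your sum over $x\not\equiv 0$), but you instead prove positivity of the six-variable count $\mathfrak{K}(p,M)$ uniformly in $M$; since its main term is only about $p^5$ against an error $\asymp p^3$ whose constant involves $16\cdot 3\cdot 35$, your (unspecified) threshold is substantially larger than $13$, so the ``finitely many small primes'' you must verify form a longer list, and that verification is only sketched. It does go through: for $p\equiv 2\pmod 3$ cubing permutes the units and sums of four units cover all of $\mathbb{Z}/p\mathbb{Z}$, as you say; for $p\equiv 1\pmod 3$ with $p\geqslant 13$, Cauchy--Davenport applied to the set $A$ of nonzero cubic residues gives $|A+A+A+A|\geqslant\min\bigl(p,\tfrac{4(p-1)}{3}-3\bigr)=p$, so four unit cubes alone hit every residue and no case-by-case listing is needed beyond $p=7$. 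Two features of your write-up improve on the paper's exposition: you state explicitly that the asymptotic for $\mathfrak{K}(p,M)$ is uniform in the target residue (exactly the uniformity the large-$p$ step requires), and you isolate precisely where the standing hypothesis that $N$ is odd enters, namely the strict inequality at $p=2$, a point the paper leaves implicit inside its hand-check of small primes.
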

\begin{proof}
   Let $\mathfrak{L}^*(q,N)$ denote the number of solutions to the following congruence
\begin{equation*}
     x^2+u_1^3+u_2^3+u_3^3+u_4^3+u_5^4+p_6^b\equiv N (\bmod q),\,\,\, 1\leqslant x,u_j\leqslant q, \,\,\, (xu_j,q)=1, \,\,\, 1\leqslant j\leqslant6.
\end{equation*}
Then we have
\begin{align}\label{p-times-L^*(p,N)}
          p\cdot \mathfrak{L}^*(p,N)
     = & \,\, \sum_{a=1}^pS_2^*(p,a)S_3^{*4}(p,a)S_4^*(p,a)S_b^*(p,a)e\bigg(-\frac{aN}{p}\bigg)
                 \nonumber    \\
     = & \,\, (p-1)^7+E_p,
\end{align}
where
\begin{equation*}
  E_p=\sum_{a=1}^{p-1}S_2^*(p,a)S_3^{*4}(p,a)S_4^*(p,a)S_b^*(p,a)e\bigg(-\frac{aN}{p}\bigg).
\end{equation*}
By (iv) of Lemma \ref{Hua-fuhe}, we obtain
\begin{equation}\label{E_p-upper}
  |E_p|\leqslant (p-1)^2(\sqrt{p}+1)(2\sqrt{p}+1)^4(3\sqrt{p}+1).
\end{equation}
It is easy to verify that $|E_p|<(p-1)^7$ for $p\geqslant13$, hence we have $\mathfrak{L}^*(p,N)>0$ for $p\geqslant 13$.
In addition, for $p=2,3,5,7,11$, we can check one by one directly by hand that $\mathfrak{L}^*(p,N)>0$. Therefore, we have
$\mathfrak{L}^*(p,N)>0$ for every prime $p$, and
\begin{equation}\label{L(p,N)=L^*(p,N)+K(p,N)}
    \mathfrak{L}(p,N)=\mathfrak{L}^*(p,N)+\mathfrak{K}(p,N)>\mathfrak{K}(p,N).
\end{equation}
From (\ref{p-times-L^*(p,N)}) and (\ref{E_p-upper}), we deduce that
\begin{equation}
    \mathfrak{L}^*(p,N)=p^6+O(p^5).
\end{equation}
By similar arguments that lead to (\ref{p-times-L^*(p,N)}) and (\ref{E_p-upper}), we have
\begin{equation}\label{K(p,N)=p^5+O(p^4)}
    \mathfrak{K}(p,N)=p^5+O(p^4).
\end{equation}
Combining (\ref{L(p,N)=L^*(p,N)+K(p,N)})--(\ref{K(p,N)=p^5+O(p^4)}), we get (\ref{L(p,N)-asymptotic}).
 This completes the proof of Lemma \ref{congruence-lemma}.
\end{proof}

\begin{lemma}\label{S(N)-convergence}
   The series $\mathfrak{S}(N)$ is convergent and satisfying $\mathfrak{S}(N)>0$.
\end{lemma}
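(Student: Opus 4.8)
The plan is to factor $\mathfrak{S}(N)$ into an Euler product and then handle convergence and positivity by different means. First I would check that $A(q,N)=B(q,N)/(q\varphi^6(q))$ is multiplicative in $q$: for coprime $q_1,q_2$ the Chinese Remainder Theorem splits the residue $a$ modulo $q_1q_2$ and factors each of the complete sums $S_2(q,a)$, $S_3^*(q,a)$, $S_4^*(q,a)$, $S_b^*(q,a)$ occurring in $B(q,N)$, while $q\varphi^6(q)$ is itself multiplicative. Hence $A(q_1q_2,N)=A(q_1,N)A(q_2,N)$, and once absolute convergence is known we obtain $\mathfrak{S}(N)=\sum_{q\geqslant1}A(q,N)=\prod_p\mathfrak{A}_p(N)$ with local factor $\mathfrak{A}_p(N):=\sum_{\ell=0}^\infty A(p^\ell,N)$.

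For convergence I would invoke the trivial bounds of Lemma \ref{Hua-fuhe}: $|S_2(q,a)|\ll q^{1/2}$ from (i) and $|S_j^*(q,a)|=|G_j(\chi^0,a)|\ll q^{1/2+\varepsilon}$ from (ii). Since $B(q,N)$ is a sum of at most $q$ terms, each a product of seven such sums (one square, four cubes, one fourth and one $b$-th power), this yields $|B(q,N)|\ll q^{9/2+\varepsilon}$ and hence, using $\varphi^6(q)\gg q^{6-\varepsilon}$, the estimate $|A(q,N)|\ll q^{-5/2+\varepsilon}$. Thus $\sum_q|A(q,N)|<\infty$, proving convergence; moreover $\mathfrak{A}_p(N)=1+O(p^{-5/2+\varepsilon})$, so the Euler product converges absolutely and $\mathfrak{A}_p(N)>0$ for all but finitely many primes $p$.

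It remains to show $\mathfrak{A}_p(N)>0$ for the finitely many small primes, and the key is the identity $\mathfrak{A}_p(N)=\lim_{L\to\infty}\mathfrak{L}(p^L,N)/\varphi(p^L)^6$. To establish it I would expand the congruence count by orthogonality as $\mathfrak{L}(p^L,N)=p^{-L}\sum_{a=1}^{p^L}S_2(p^L,a)S_3^{*4}(p^L,a)S_4^*(p^L,a)S_b^*(p^L,a)e(-aN/p^L)$ and group the summands by $\gcd(a,p^L)=p^{L-\ell}$. Writing $a=p^{L-\ell}a'$ with $(a',p)=1$ and using $S_2(p^L,p^{L-\ell}a')=p^{L-\ell}S_2(p^\ell,a')$ together with $S_j^*(p^L,p^{L-\ell}a')=p^{L-\ell}S_j^*(p^\ell,a')$ for $\ell\geqslant1$, one collapses the inner sums to $(p^{L-\ell})^7B(p^\ell,N)$; the exceptional level $\ell=0$ (i.e. $a\equiv0$) is where the Gauss sums degenerate, $S_j^*(p^L,0)=\varphi(p^L)$ rather than $p^L$, so that term contributes $p^L\varphi(p^L)^6$ to the $a$-sum. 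Inserting $B(p^\ell,N)p^{-7\ell}=A(p^\ell,N)(1-1/p)^6$ for $\ell\geqslant1$ then telescopes everything to $\mathfrak{L}(p^L,N)=\varphi(p^L)^6\sum_{\ell=0}^{L}A(p^\ell,N)$, and dividing by $\varphi(p^L)^6$ and letting $L\to\infty$ gives the identity. I expect the delicate, error-prone point to be exactly this bookkeeping of the $\ell=0$ term, whose weight $\varphi(p^L)^6$ (not $p^{7L}$) is what makes the normalization come out to $\varphi(p^L)^6$ rather than $p^{6L}$.

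With the identity in hand, positivity at small primes follows from Lemma \ref{congruence-lemma}. Its proof shows $\mathfrak{L}^*(p,N)>0$, i.e. there is a solution of $x^2+u_1^3+u_2^3+u_3^3+u_4^3+u_5^4+u_6^b\equiv N\pmod p$ with every coordinate coprime to $p$; such a solution is nonsingular modulo $p$, since for odd $p$ the derivative $2x$ in the square variable is a unit, while for $p=2$ the derivative $3u_1^2$ in a cube variable is odd. Hensel's lemma then lifts it to $p^{6(L-1)}$ solutions modulo $p^L$ with the $u_j$ still coprime to $p$, so $\mathfrak{L}(p^L,N)\geqslant p^{6(L-1)}$ and therefore $\mathfrak{A}_p(N)=\lim_L\mathfrak{L}(p^L,N)/\varphi(p^L)^6\geqslant(p-1)^{-6}>0$. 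Combining the three ingredients, $\mathfrak{S}(N)=\prod_p\mathfrak{A}_p(N)$ is an absolutely convergent product whose factors are all strictly positive and tend to $1$, whence $\mathfrak{S}(N)>0$.
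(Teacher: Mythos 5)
Your proof is correct, but it takes a genuinely different route at the decisive step, namely the identification and positivity of the local factors. The paper never forms the full local sums $\mathfrak{A}_p(N)=\sum_{\ell\geqslant0}A(p^\ell,N)$: it invokes part (v) of Lemma \ref{Hua-fuhe}, which gives $S_3^*(p^\ell,a)=0$ for $\ell\geqslant2$ when $p\neq3$ and $S_4^*(3^\ell,a)=0$ for $\ell\geqslant2$, so that $A(p^\ell,N)=0$ for every prime $p$ and every $\ell\geqslant2$, and the Euler product collapses to $\prod_p\big(1+A(p,N)\big)$. Positivity of each small-prime factor is then immediate from the finite, level-one identity $1+A(p,N)=\mathfrak{L}(p,N)/(p-1)^6$ combined with Lemma \ref{congruence-lemma}, which yields $\mathfrak{L}(p,N)>\mathfrak{K}(p,N)\geqslant0$; no $p$-adic limit and no Hensel lifting are needed. (For the tail the paper is also more explicit than you: using (iii)--(iv) of Lemma \ref{Hua-fuhe} it shows $|A(p,N)|\leqslant100/p^2$ for $p\geqslant19$, rather than an $O(p^{-5/2+\varepsilon})$ bound for unspecified large $p$.) Your route --- proving $\mathfrak{A}_p(N)=\lim_{L\to\infty}\mathfrak{L}(p^L,N)/\varphi(p^L)^6$ by orthogonality and the $\gcd(a,p^L)$ decomposition, and then lifting the all-units solution guaranteed by $\mathfrak{L}^*(p,N)>0$ via Hensel's lemma (correctly switching from the square variable, which is singular at $p=2$, to a cube variable there) --- is the standard and more general argument: it makes no use of the vanishing property (v) and would work for any configuration of exponents, at the price of the bookkeeping in the orthogonality expansion (including the exceptional $\ell=0$ term) and the nonsingularity check. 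Both arguments are sound; the paper's is shorter precisely because the exponents $(2,3,3,3,3,4,b)$ force every local factor to be a two-term sum, while yours is the one you would need if that special structure were absent.
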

\begin{proof}
 From (i) and (ii) of Lemma \ref{Hua-fuhe}, we get
\begin{equation*}
  |A(q,N)|\ll\frac{|B(q,N)|}{q\varphi^6(q)}\ll \frac{q^{5/2+6\varepsilon}}{\varphi^5(q)}\ll\frac{q^{5/2+6\varepsilon}(\log\log q)^5}{q^5}
  \ll \frac{1}{q^2}.
\end{equation*}
Thus, the series
\begin{equation*}
  \mathfrak{S}(N)=\sum_{q=1}^\infty A(q,N)
\end{equation*}
converges absolutely. Noting that $A(q,N)$ is multiplicative in $q$ and by (v) of Lemma \ref{Hua-fuhe}, we have
\begin{equation}\label{S(N)-prod-extension}
   \mathfrak{S}(N)=\prod_p \big(1+A(p,N)\big).
\end{equation}
From (iii) and (iv), we know that, for $p\geqslant19$, there holds
\begin{equation*}
  |A(p,N)|\leqslant\frac{(p-1)^2\sqrt{p}(2\sqrt{p}+1)^4(3\sqrt{p}+1)}{p(p-1)^6}\leqslant\frac{100}{p^2}.
\end{equation*}
Therefore, there holds
\begin{equation}\label{S(N)-wei-lower}
  \prod_{p\geqslant19}\big(1+A(p,N)\big)\geqslant\prod_{p\geqslant19}\bigg(1-\frac{100}{p^2}\bigg)\geqslant c_1>0.
\end{equation}
On the other hand, it is easy to see that
\begin{equation}\label{1+A(p,N)=jieshu}
  1+A(p,N)=\frac{\mathfrak{L}(p,N)}{(p-1)^6},
\end{equation}
from which and (\ref{L(p,N)=L^*(p,N)+K(p,N)}), we have $1+A(p,N)>0$.
Therefore, there holds
\begin{equation}\label{S(N)-shou-lower}
  \prod_{p<19}\big(1+A(p,N)\big)\geqslant c_2>0.
\end{equation}
Finally, from (\ref{S(N)-prod-extension})--(\ref{S(N)-shou-lower}), we conclude that $\mathfrak{S}(N)>0$. This completes the proof of Lemma \ref{S(N)-convergence}.
\end{proof}

In view of Lemma \ref{S(N)-convergence}, we define
\begin{equation}\label{omega(d)-def}
   \omega(d)=\frac{\mathfrak{S}_d(N)}{\mathfrak{S}(N)}.
\end{equation}
Similar to (\ref{S(N)-prod-extension}), we have
\begin{equation}\label{singular(S)_d-explicit}
  \mathfrak{S}_d(N)=\prod_p\big(1+A_d(p,N)\big)=\prod_{p\nmid d}\big(1+A_d(p,N)\big) \prod_{p|d}\big(1+A_d(p,N)\big).
\end{equation}
If $(d,q)=1$, then we have $S_k(q,ad^k)=S_k(q,a)$. Moreover, if $p|d$, then we get $A_d(p,N)=A_p(p,N)$. Therefore, it follows
from (\ref{S(N)-prod-extension}), (\ref{omega(d)-def}) and (\ref{singular(S)_d-explicit}) that
\begin{equation}\label{omega(p)-fd}
   \omega(p)=\frac{1+A_p(p,N)}{1+A(p,N)}, \qquad \omega(d)=\prod_{p|d}\omega(p).
\end{equation}
Also, it is easy to show that
\begin{equation}\label{omega(p)-yz-1}
  1+A_p(p,N)=\frac{p}{(p-1)^6}\mathfrak{K}(p,N).
\end{equation}
From (\ref{1+A(p,N)=jieshu}), (\ref{omega(p)-fd}) and (\ref{omega(p)-yz-1}), we deduce that
\begin{equation}\label{omega(p)-solution}
  \omega(p)=\frac{p\cdot \mathfrak{K}(p,N)}{\mathfrak{L}(p,N)}.
\end{equation}
According to (\ref{L(p,N)-asymptotic}), (\ref{K(p,N)-asymptotic}), (\ref{omega(p)-fd}) and (\ref{omega(p)-solution}), we obtain the following lemma.

\begin{lemma}\label{omega(p)-property}
  The function $\omega(d)$ is multiplicative and satisfies
\begin{equation}\label{Omega-condition}
   0\leqslant\omega(p)<p,\qquad \omega(p)=1+O(p^{-1}).
\end{equation}
\end{lemma}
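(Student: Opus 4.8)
The plan is to read off all three assertions directly from the two formulas \eqref{omega(p)-fd} and \eqref{omega(p)-solution} together with the quantitative information packaged in Lemma \ref{congruence-lemma}; no new circle-method or sieve input is needed, so the work is almost entirely bookkeeping with the congruence counts $\mathfrak{K}(p,N)$ and $\mathfrak{L}(p,N)$.

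First I would dispose of multiplicativity. The second identity in \eqref{omega(p)-fd} states $\omega(d)=\prod_{p\mid d}\omega(p)$, and since this product runs only over the distinct prime divisors of $d$, it is immediate that $\omega(d_1 d_2)=\omega(d_1)\omega(d_2)$ whenever $(d_1,d_2)=1$; hence $\omega$ is multiplicative. Thus the multiplicativity claim requires only a one-line appeal to \eqref{omega(p)-fd}.

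Next I would establish the two-sided bound $0\leqslant\omega(p)<p$ of \eqref{Omega-condition} using the closed form \eqref{omega(p)-solution}, namely $\omega(p)=p\,\mathfrak{K}(p,N)/\mathfrak{L}(p,N)$. Non-negativity is clear because $\mathfrak{K}(p,N)$ and $\mathfrak{L}(p,N)$ are counts of solutions to congruences, hence non-negative, while the denominator is in fact strictly positive: from the decomposition $\mathfrak{L}(p,N)=\mathfrak{L}^*(p,N)+\mathfrak{K}(p,N)$ together with the positivity $\mathfrak{L}^*(p,N)>0$ already verified inside the proof of Lemma \ref{congruence-lemma} (by the exponential-sum bound \eqref{E_p-upper} for $p\geqslant 13$ and a direct check for $p=2,3,5,7,11$), we get $\mathfrak{L}(p,N)>0$ for every prime $p$. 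The upper bound $\omega(p)<p$ is equivalent, after clearing the positive denominator, to $\mathfrak{K}(p,N)<\mathfrak{L}(p,N)$, which is precisely the strict inequality $\mathfrak{L}(p,N)>\mathfrak{K}(p,N)$ asserted in Lemma \ref{congruence-lemma}. So both inequalities reduce to facts already in hand.

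Finally, for the asymptotic $\omega(p)=1+O(p^{-1})$ I would substitute the evaluations \eqref{L(p,N)-asymptotic} and \eqref{K(p,N)-asymptotic}, i.e. $\mathfrak{L}(p,N)=p^6+O(p^5)$ and $\mathfrak{K}(p,N)=p^5+O(p^4)$, into \eqref{omega(p)-solution}, obtaining
\[
   \omega(p)=\frac{p\bigl(p^5+O(p^4)\bigr)}{p^6+O(p^5)}
           =\frac{p^6+O(p^5)}{p^6+O(p^5)}
           =\frac{1+O(p^{-1})}{1+O(p^{-1})}=1+O(p^{-1}),
\]
which is the remaining claim of \eqref{Omega-condition}. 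The only point that deserves care — and the closest thing to an obstacle — is making sure the denominator never vanishes so that these ratios are legitimate for \emph{all} primes, not merely the large ones covered by the asymptotics; but as noted above this is secured by the positivity of $\mathfrak{L}^*(p,N)$, including the finitely many small primes handled by hand in Lemma \ref{congruence-lemma}. With that point in place the lemma follows.
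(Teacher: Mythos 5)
Your proof is correct and takes essentially the same route as the paper, which derives the lemma by combining the multiplicativity relation in (\ref{omega(p)-fd}), the closed form (\ref{omega(p)-solution}) together with the strict inequality $\mathfrak{L}(p,N)>\mathfrak{K}(p,N)$ from Lemma \ref{congruence-lemma}, and the asymptotics (\ref{L(p,N)-asymptotic}) and (\ref{K(p,N)-asymptotic}). Your explicit verification that the denominator $\mathfrak{L}(p,N)$ is strictly positive for every prime (via $\mathfrak{L}^*(p,N)>0$) is a point the paper leaves implicit, but it is the same underlying argument.
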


\section{Proof of Theorem \ref{Theorem}}

In this section, let $f(s)$ and $F(s)$ denote the classical functions in the linear sieve theory. Then by (2.8) and (2.9) of Chapter 8 in \cite{Halberstam-Richert}, we have
\begin{equation*}
  F(s)=\frac{2e^\gamma}{s},\quad 1\leqslant s\leqslant3;\qquad f(s)=\frac{2e^\gamma\log(s-1)}{s},\quad 2\leqslant s\leqslant4.
\end{equation*}
In the proof of Theorem \ref{Theorem}, let $\lambda^{\pm}(d)$ be the lower and upper bounds for Rosser's weights of level $D$, hence
for any positive integer $d$ we have
\begin{equation*}
  |\lambda^{\pm}(d)|\leqslant1,\quad \lambda^{\pm}(d)=0 \quad \textrm{if} \quad d>D \quad \textrm{or}\quad \mu(d)=0.
\end{equation*}
For further properties of Rosser's weights we refer to Iwaniec \cite{Iwaniec-1}.
Let
\begin{equation*}
  \mathscr{V}(z)=\prod_{2<p<z}\bigg(1-\frac{\omega(p)}{p}\bigg).
\end{equation*}
Then from Lemma \ref{omega(p)-property} and Mertens' prime number theorem (See \cite{Mertens}) we obtain
\begin{equation}
    \mathscr{V}(z)\asymp \frac{1}{\log N}.
\end{equation}

In order to prove Theorem \ref{Theorem}, we need the following lemma:
\begin{lemma}
  Under the condition (\ref{Omega-condition}), then if $z\leqslant D$, there holds
\begin{equation}
   \sum_{d|\mathfrak{P}}\frac{\lambda^-(d)\omega(d)}{d}\geqslant\mathscr{V}(z)\bigg(f\bigg(\frac{\log D}{\log z}\bigg)+O\big(\log^{-1/3}D\big)\bigg),
\end{equation}
and if $z\leqslant D^{1/2}$, there holds
\begin{equation}
   \sum_{d|\mathfrak{P}}\frac{\lambda^+(d)\omega(d)}{d}\leqslant\mathscr{V}(z)\bigg(F\bigg(\frac{\log D}{\log z}\bigg)+O\big(\log^{-1/3}D\big)\bigg).
\end{equation}
\end{lemma}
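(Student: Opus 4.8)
The plan is to recognise this as the fundamental inequality of the linear (dimension one) sieve equipped with Rosser's weights, and to deduce it by verifying that $\omega$ meets the hypotheses of Iwaniec's theory. First I would record that, by Lemma~\ref{omega(p)-property}, the multiplicative function $\omega$ obeys $0\leqslant\omega(p)<p$ together with $\omega(p)=1+O(p^{-1})$; the latter is precisely the sieve dimension one density condition. Combined with Mertens' theorem it yields the two-sided estimate
\begin{equation*}
   \prod_{w\leqslant p<z}\bigg(1-\frac{\omega(p)}{p}\bigg)=\frac{\mathscr{V}(z)}{\mathscr{V}(w)}=\frac{\log w}{\log z}\bigg(1+O\Big(\frac{1}{\log w}\Big)\bigg)\qquad(2<w<z),
\end{equation*}
which is the engine behind all the main-term computations.

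Next I would invoke the combinatorial structure of Rosser's weights of level $D$. By construction (see Iwaniec \cite{Iwaniec-1}) the weights are supported on squarefree $d\leqslant D$ built from primes below $z$, they satisfy $|\lambda^{\pm}(d)|\leqslant1$, and the truncated divisor sums obey $\sum_{d\mid n}\lambda^-(d)\leqslant\mathbf{1}_{n=1}\leqslant\sum_{d\mid n}\lambda^+(d)$ for every $n\mid\mathfrak{P}$. The core of the argument is to pass from these combinatorial bounds to the analytic statements by expanding $\sum_{d\mid\mathfrak{P}}\lambda^{\pm}(d)\omega(d)/d$, normalising by $\mathscr{V}(z)$, and comparing the result with the continuous sieve functions. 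Applying a Buchstab-type decomposition on the largest prime factor of $d$ and iterating, the resulting discrete recursion is matched, level by level, against the differential--difference equations
\begin{equation*}
   \frac{\mathrm{d}}{\mathrm{d}s}\big(sF(s)\big)=f(s-1)\ \ (s>2),\qquad \frac{\mathrm{d}}{\mathrm{d}s}\big(sf(s)\big)=F(s-1)\ \ (s>1),
\end{equation*}
whose solutions are exactly the functions $F$ and $f$ recorded above, evaluated at $s=\log D/\log z$. The upper-bound half requires $z\leqslant D^{1/2}$, whereas the lower-bound half remains valid in the wider range $z\leqslant D$; this asymmetry simply reflects the different support constraints that Rosser's construction imposes on $\lambda^{+}$ and $\lambda^{-}$.

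The hard part will be the uniform control of the accumulated error. Each step of the Buchstab iteration introduces a discrepancy between the discrete prime sum and the continuous integral, governed by the error term in the Mertens-type product above, and one must show that these discrepancies collapse to the single factor $O(\log^{-1/3}D)$ rather than growing with the number of prime factors of $d$. This is handled by the standard device of truncating the iteration after finitely many Buchstab steps, estimating the remaining tail by the rapid decay of $F(s)-f(s)$ as $s\to\infty$, and absorbing the per-step losses using the density condition $\omega(p)=1+O(p^{-1})$ from (\ref{Omega-condition}). Since each of these estimates is exactly one supplied by Iwaniec's treatment of Rosser's sieve under (\ref{Omega-condition}), the two claimed inequalities follow, completing the proof.
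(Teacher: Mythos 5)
Correct, and essentially the same approach as the paper: the paper's entire proof of this lemma is the single citation to (12) and (13) of Lemma 3 in Iwaniec \cite{Iwaniec-2}, and your argument likewise reduces everything to Iwaniec's linear-sieve theory for Rosser's weights of level $D$, with the hypothesis \eqref{Omega-condition} supplied by Lemma \ref{omega(p)-property}. Your expository sketch of the Buchstab iteration is harmless---though the differential--difference equations hold for $s>3$ and $s>2$, not $s>2$ and $s>1$ as you wrote (on $(1,2)$ one has $(sf(s))'=0\neq F(s-1)$)---since in the end you defer, exactly as the paper does, to Iwaniec's theorem.
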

\begin{proof}
   See (12) and (13) of Lemma 3 in Iwaniec \cite{Iwaniec-2}.
\end{proof}

Let $M(b)=[\frac{72b}{36-b}]$. From the definition of $\mathscr{M}_r$, we know that $r\leqslant M(b)$. Therefore, we have
\begin{eqnarray}\label{R_b(N)-lower-bound}
                    \mathcal{R}_b(N)
     & \geqslant &  \sum_{\substack{m^2+p_1^3+p_2^3+p_3^3+p_4^3+p_5^4+p_6^b=N\\ m\in\mathcal{L},\qquad
                   (m,\mathfrak{P})=1\\ U_3<p_1,\,p_2\leqslant2U_3,\, U_3^*<p_3,\,p_4\leqslant 2U_3^*\\ U_4<p_5\leqslant2U_4,\,U_b<p_6\leqslant2U_b }}1-
                    \sum_{r=r(b)+1}^{M(b)} \sum_{\substack{m^2+p_1^3+p_2^3+p_3^3+p_4^3+p_5^4+p_6^b=N\\ m\in\mathscr{M}_r,\,\,
                    U_4<p_5\leqslant 2U_4,\,U_b<p_6\leqslant2U_b \\ U_3<p_1,\,p_2\leqslant2U_3,\, U_3^*<p_3,\,p_4\leqslant2U_3^* }}1
                           \nonumber \\
     & =: & \Gamma_0-\sum_{r=r(b)+1}^{M(b)}\Gamma_{r}.
\end{eqnarray}
By the property of Rosser's weight $\lambda^-(d)$ and Proposition \ref{mean-value-theorem-1}, we get
\begin{eqnarray}\label{Gamma_0-lower-bound}
  \Gamma_0  & \geqslant & \frac{1}{\log\mathbf{U}}\sum_{\substack{m^2+p_1^3+p_2^3+p_3^3+p_4^3+p_5^4+p_6^b=N\\ m\in\mathcal{L},\qquad
                   (m,\mathfrak{P})=1\\ U_3<p_1,\,p_2\leqslant2U_3,\, U_3^*<p_3,\,p_4\leqslant 2U_3^*\\ U_4<p_5\leqslant2U_4,\,U_b<p_6\leqslant2U_b }}
                          \prod_{j=1}^6\log p_j
                               \nonumber  \\
  & = & \frac{1}{\log\mathbf{U}}\sum_{\substack{m^2+p_1^3+p_2^3+p_3^3+p_4^3+p_5^4+p_6^b=N\\ m\in\mathcal{L},\,\,
                    U_4<p_5\leqslant 2U_4,\,U_b<p_6\leqslant2U_b \\ U_3<p_1,\,p_2\leqslant2U_3,\, U_3^*<p_3,\,p_4\leqslant2U_3^*}}
            \bigg(\prod_{j=1}^6\log p_j\bigg) \sum_{d|(m,\mathfrak{P})}\mu(d)
                                 \nonumber  \\
  & \geqslant & \frac{1}{\log\mathbf{U}} \sum_{\substack{m^2+p_1^3+p_2^3+p_3^3+p_4^3+p_5^4+p_6^b=N\\ m\in\mathcal{L},\,\,
                    U_4<p_5\leqslant 2U_4,\,U_b<p_6\leqslant2U_b \\ U_3<p_1,\,p_2\leqslant2U_3,\, U_3^*<p_3,\,p_4\leqslant2U_3^*}}
             \bigg(\prod_{j=1}^6\log p_j\bigg) \sum_{d|(m,\mathfrak{P})}\lambda^-(d)
                                  \nonumber  \\
  & = & \frac{1}{\log\mathbf{U}} \sum_{d|\mathfrak{P}}\lambda^-(d)J(N,d)
                                  \nonumber  \\
  & = & \frac{1}{\log\mathbf{U}} \sum_{d|\mathfrak{P}}\frac{\lambda^-(d)\mathfrak{S}_d(N)}{d} \mathcal{J}(N)+ O\big(N^{\frac{35}{36}+\frac{1}{b}}\log^{-A}N\big)
                                  \nonumber  \\
  & = & \frac{1}{\log\mathbf{U}} \bigg(\sum_{d|\mathfrak{P}}\frac{\lambda^-(d)\omega(d)}{d}\bigg)\mathfrak{S}(N)\mathcal{J}(N)
             + O\big(N^{\frac{35}{36}+\frac{1}{b}}\log^{-A}N\big)
                                  \nonumber  \\
  & \geqslant & \frac{\mathfrak{S}(N)\mathcal{J}(N)\mathscr{V}(z)}{\log\mathbf{U}} f(3)\Big(1+O\big(\log^{-1/3}D\big)\Big)
                   + O\big(N^{\frac{35}{36}+\frac{1}{b}}\log^{-A}N\big) .
\end{eqnarray}
By the property of Rosser's weight $\lambda^+(d)$ and Proposition \ref{mean-value-theorem-2}, we have
\begin{align*}
   \Gamma_r   \leqslant & \,\, \sum_{\substack{ (\ell p)^2+m^3+p_2^3+p_3^3+p_4^3+p_5^4+p_6^b=N\\ \ell\in\mathscr{N}_r,\,\,\,\,
                                            \ell p\in\mathcal{L},\,\,\,(m,\mathfrak{P})=1 \\ U_3<p_2\leqslant2U_3,\, U_3^*<p_3,\,p_4\leqslant 2U_3^*\\ U_4<p_5\leqslant2U_4,\,U_b<p_6\leqslant2U_b }} 1
                        \nonumber  \\
\end{align*}
\begin{align}\label{Gamma_r-upper-bound}
    \leqslant & \,\,\frac{1}{\log\mathbf{W}}\sum_{\substack{(\ell p)^2+m^3+p_2^3+p_3^3+p_4^3+p_5^4+p_6^b=N\\ \ell\in\mathscr{N}_r,\,\,\,\,
                                            \ell p\in\mathcal{L},\,\,\,(m,\mathfrak{P})=1 \\ U_3<p_2\leqslant2U_3,\, U_3^*<p_3,\,p_4\leqslant 2U_3^*\\ U_4<p_5\leqslant2U_4,\,U_b<p_6\leqslant2U_b}}
                  \frac{\log p}{\log\frac{U_2}{\ell}} \prod_{j=2}^6\log p_j
                        \nonumber  \\
    = & \,\,\frac{1}{\log\mathbf{W}} \sum_{\substack{(\ell p)^2+m^3+p_2^3+p_3^3+p_4^3+p_5^4+p_6^b=N\\ \ell\in\mathscr{N}_r,\quad
                                            \ell p\in\mathcal{L}, \\ U_3<p_2\leqslant2U_3,\, U_3^*<p_3,\,p_4\leqslant 2U_3^*\\
                                              U_4<p_5\leqslant2U_4,\,U_b<p_6\leqslant2U_b }}
                  \Bigg(\frac{\log p}{\log\frac{U_2}{\ell}} \prod_{j=2}^6\log p_j\Bigg)\sum_{d|(m,\mathfrak{P})}\mu(d)
                          \nonumber  \\
    \leqslant & \,\,\frac{1}{\log\mathbf{W}} \sum_{\substack{(\ell p)^2+m^3+p_2^3+p_3^3+p_4^3+p_5^4+p_6^b=N\\ \ell\in\mathscr{N}_r,\quad
                                            \ell p\in\mathcal{L}, \\ U_3<p_2\leqslant2U_3,\, U_3^*<p_3,\,p_4\leqslant 2U_3^*\\
                                              U_4<p_5\leqslant2U_4,\,U_b<p_6\leqslant2U_b }}
                  \Bigg(\frac{\log p}{\log\frac{U_2}{\ell}} \prod_{j=2}^6\log p_j\Bigg)\sum_{d|(m,\mathfrak{P})}\lambda^+(d)
                           \nonumber  \\
    = & \,\,\frac{1}{\log\mathbf{W}} \sum_{d|\mathfrak{P}} \lambda^+(d)J_r(N,d)
                            \nonumber  \\
    = & \,\,\frac{1}{\log\mathbf{W}} \sum_{d|\mathfrak{P}} \frac{\lambda^+(d)c_r(b)\mathfrak{S}_d(N)}{d\log U_2}\mathcal{J}(N)
            +O\big(N^{\frac{35}{36}+\frac{1}{b}}\log^{-A}N\big)
                            \nonumber  \\
    = & \,\,\frac{c_r(b)\mathfrak{S}(N)\mathcal{J}(N)}{(\log U_2)\log\mathbf{W}}
         \sum_{d|\mathfrak{P}}\frac{\lambda^+(d)\omega(d)}{d}+O\big(N^{\frac{35}{36}+\frac{1}{b}}\log^{-A}N\big)
                            \nonumber  \\
    \leqslant & \,\,\frac{c_r(b)\mathfrak{S}(N)\mathcal{J}(N)\mathscr{V}(z)}{\log\mathbf{U}} F(3)\Big(1+O\big(\log^{-1/3}D\big)\Big)
                   + O\big(N^{\frac{35}{36}+\frac{1}{b}}\log^{-A}N\big).
\end{align}
Define
\begin{equation*}
 C(b)=\sum_{r=r(b)+1}^{M(b)}c_r(b).
\end{equation*}
According to simple numerical calculation, we obtain
\begin{equation}\label{numerical-calculation-1}
   C(12)<0.681372,\,\, C(13)<0.430703,\,\, C(14)<0.408611,\,\, C(15)<0.649606,
\end{equation}
\begin{equation}\label{numerical-calculation-2}
   C(16)<0.496677,\,\, C(17)<0.386493,\,\, C(18)<0.621141,\,\, C(19)<0.651975,
\end{equation}
\begin{equation}\label{numerical-calculation-3}
   C(20)<0.382485,\,\, C(21)<0.631281,\,\, C(22)<0.599447,\,\, C(23)<0.426621,
\end{equation}
\begin{equation}\label{numerical-calculation-4}
   C(24)<0.394069,\,\, C(25)<0.644773,\,\, C(26)<0.603438,\,\, C(27)<0.510736,
\end{equation}
\begin{equation}\label{numerical-calculation-5}
   C(28)<0.615415,\,\, C(29)<0.502098,\,\, C(30)<0.660826,\,\, C(31)<0.403155,
\end{equation}
\begin{equation}\label{numerical-calculation-6}
   C(32)<0.656868,\,\, C(33)<0.635545,\,\, C(34)<0.669316,\,\, C(35)<0.547965.
\end{equation}
From (\ref{R_b(N)-lower-bound})--(\ref{numerical-calculation-6}), we deduce that
\begin{align*}
   \mathcal{R}_b(N) & \geqslant \big(f(3)-F(3)C(b)\big)\Big(1+O\big(\log^{-\frac{1}{3}} D\big)\Big)
                     \frac{\mathfrak{S}(N)\mathcal{J}(N)\mathscr{V}(z)}{\log\mathbf{U}} +
                                     O\bigg(\frac{N^{\frac{35}{36}+\frac{1}{b}}}{\log^{A}N}\bigg)  \\
\end{align*}
\begin{align*}
   & \geqslant \frac{2e^\gamma}{3}\big(\log2-0.681372\big)
                 \bigg(1+O\Big(\frac{1}{\log^{1/3} D}\Big)\bigg)\frac{\mathfrak{S}(N)\mathcal{J}(N)\mathscr{V}(z)}{\log\textbf{U}}
              + O\bigg(\frac{N^{\frac{35}{36}+\frac{1}{b}}}{\log^{A}N}\bigg)
                                    \nonumber  \\
   & \gg N^{\frac{35}{36}+\frac{1}{b}}\log^{-7}N,
\end{align*}
which completes the proof of Theorem \ref{Theorem}.

\section*{Acknowledgement}

   The authors would like to express the most sincere gratitude to Professor Wenguang Zhai for his valuable advice and constant encouragement.

\end{document}